\newtheorem{theorem}{Theorem}[section]
\newtheorem{proposition}[theorem]{Proposition}
\newtheorem{lemma}[theorem]{Lemma}
\theoremstyle{definition}
\newtheorem{remarks}[theorem]{Remarks}
\newtheorem{definition}[theorem]{Definition}
\newcommand{\N}{\mathbb{N}} 
\newcommand{\R}{\mathbb{R}} 
\newcommand{\C}{\mathbb{C}} 
\newcommand{\D}{\mathbb{D}} 
\newcommand{\T}{\mathbb{T}} 
\newcommand{\ovl}{\overline}
\newcommand{\al}{\alpha}
\newcommand{\ve}{\varepsilon}
\newcommand{\dis}{\displaystyle}
\begin{document}

\title[Subspaces of compositionally frequently hypercyclic functions]
{Subspaces of frequently hypercyclic functions for sequences of composition operators}

\date{}

\author[Bernal]{L.~Bernal-Gonz\'alez}
\address{Departamento de An\'{a}lisis Matem\'{a}tico, Facultad de Matem\'{a}ticas
\newline\indent Instituto de Matem\'aticas Antonio de Castro Brzezicki (IMUS)
\newline\indent Universidad de Sevilla
\newline\indent Avda. Reina Mercedes s/n
\newline\indent 41080 Sevilla, Spain.}
\email{lbernal@us.es}

\author[Calder\'on]{M.C.~Calder\'on-Moreno}
\address{Departamento de An\'{a}lisis Matem\'{a}tico, Facultad de Matem\'{a}ticas
\newline\indent Instituto de Matem\'aticas Antonio de Castro Brzezicki (IMUS)
\newline\indent Universidad de Sevilla
\newline\indent Avda. Reina Mercedes s/n
\newline\indent 41080 Sevilla, Spain.}
\email{mccm@us.es}

\author[Jung]{A.~Jung}
\address{Fachbereich IV Mathematik
\newline\indent Universit\"at Trier
\newline\indent D-54286 Trier, Germany}
\email{andreas.tibor.jung@gmail.com}

\author[Prado]{J.A.~Prado-Bassas}
\address{Departamento de An\'{a}lisis Matem\'{a}tico
\newline\indent Facultad de Matem\'{a}ticas
\newline\indent Universidad de Sevilla
\newline\indent Avda. Reina Mercedes s/n
\newline\indent 41080 Sevilla, Spain.}
\email{bassas@us.es}

\keywords{hypercyclic sequence of operators, composition operator, frequent hypercyclicity, holomorphic function, lineability}
\subjclass[2010]{30E10, 47B33, 47A16, 47B38}

\thanks{}

\begin{abstract}
In this paper, a criterion for a sequence of composition operators defined on the space of holomorphic functions in a complex domain to be frequently hypercyclic is provided. Such a criterion improves some already known special cases and, in addition, it is also valid to provide dense vector subspaces as well as large closed ones consisting entirely, except for zero, of functions that are frequently hypercyclic.
\end{abstract}

\maketitle

\section{Introduction, notation and background}

\quad The phenomenon of hypercyclicity has become a trend in the last three decades. Roughly speaking, it means
density of some orbit of a vector under the action of an operator or a sequence of operators. When this density
is quantified in some optimal sense, the property of frequent hypercyclicity
-- a concept coined by Bayart and Grivaux \cite{bayartgrivaux2006} -- arises naturally.
In this paper we are concerned with the study of frequent hypercyclicity of sequences of composition operators
acting on holomorphic functions defined in a domain of the complex plane.
But prior to going on, let us fix some notation and definitions, mostly standard.

\vskip 4pt

By \,$\D$, $\ovl{\D}$ and $\T$ \,we denote, respectively, the open unit disc $\{|z| < 1\}$, the closed
unit disc $\{|z| \le 1\}$ and the unit circle $\{|z| = 1\}$ in the complex plane \,$\C$.
A {\it domain} is a nonempty connected open subset $G \subset \C$.
Its one-point compactification \,$G \cup \{\infty \}$ \,will be represented by \,$G_\infty$.
A domain \,$G \subset \C$ \,is called {\it simply connected} provided that \,$\C_\infty \setminus G$ \,is connected.
The symbol \,$H(G)$ \,will stand for the space of holomorphic functions on $G$.
It becomes an F-space --that is, a complete metrizable topological vector space-- under
the topology of uniform convergence on compact subsets of $G$.

\vskip 4pt

Next, we recall the notion of hypercyclicity.
For a good account of concepts, results and history concerning
this topic, the reader is referred to the books \cite{bayartmatheron2009,grosseperis2011}.
Let $X$ and $Y$ be two (Hausdorff) topological
vector spaces. A sequence
\,$T_n : X \to Y$ $(n \in \N )$ of continuous linear mappings is said to be {\it hypercyclic} provided that there is a vector
$x_0 \in X$ (called hypercyclic for $(T_n)_n$) such that the orbit \,$\{T_n x_0: \, n \in \N\}$
\,is dense in \,$Y$. An operator $T$ on $X$ (i.e., a continuous linear selfmapping $X \to X$) is said to be {\it hypercyclic} if the sequence $(T^{n})_n$ of
its iterates is hypercyclic. The corresponding sets of hypercyclic vectors will be respectively denoted by
$HC((T_n)_n)$ and $HC(T)$.
As a stronger notion of hypercyclicity, the property of frequent hypercyclicity  was introduced in \cite{bayartgrivaux2006}
for operators, and then extended in \cite{bonillagrosse2007} to sequences of linear mappings.
Given a subset $A\subset\N$, the lower density of $A$ is defined as
$\underline{\rm dens}\,(A) = \liminf_{n\to\infty} \frac{{\rm card} \, (A \cap \{1,\dots,n\})}{n}$.
Assume that $X$ and $Y$ are topological vector spaces. Then a sequence $T_n : X \to Y$ $(n \in \N )$ of continuous linear mappings is
called {\it frequently hypercyclic} provided that there is a vector
$x_0 \in X$ (called frequently hypercyclic for $(T_n)_n$) such that
$\underline{\rm dens} \,(\{ n \in \N : \, T_n x_0 \in V \}) > 0$
for every nonempty open set $V \subset Y$. An operator $T$ on $X$ is said to be {\it frequently hypercyclic} if the sequence $(T^{n})_n$ of
its iterates is frequently hypercyclic. The corresponding sets of frequently hypercyclic vectors will be respectively denoted by
$FHC((T_n)_n)$ and $FHC(T)$. Each set $HC(T),\, FHC(T)$ is dense as soon as it is nonempty.

\vskip 4pt

Our main concern enters the realm of composition operators. If \,$G \subset \C$ \,is a domain,
then every holomorphic selfmap \,$\varphi : G \to G$ \,defines a {\it composition operator} on
\,$H(G)$ given by
\,$C_\varphi : f \in H(G) \longmapsto f \circ \varphi \in H(G)$.
By $H(G,G)$, $H_{1-1}(G)$ \,and \,${\rm Aut} (G)$ \,we denote, respectively, the family of such selfmaps, the
subfamily of all univalent (i.e., one-to-one) members of $H(G,G)$, and the subfamily of all automorphisms of $G$ (that is, the bijective
holomorphic functions $G \to G$). Hence \,${\rm Aut} (G) \subset H_{1-1}(G) \subset H(G,G)$.
According to \cite{bernalmontes1995}, a sequence \,$(\varphi_n)_n \subset H(G,G)$ \,is said to be {\it runaway}
\,whenever it satisfies the following property: for every compact set \,$K \subset G$, there exists \,$N \in \N$ \,such that
$K \cap \varphi_N (K) = \varnothing$.
A function \,$\varphi \in H(G,G)$ \,is said to be {\it runaway} \,if the sequence \,$(\varphi^{n})$
\,of its compositional iterates 
is runaway.

\vskip 2pt

The dynamics of a sequence \,$(C_{\varphi_n})_n$ \,has been studied by several authors, so as to obtain, among others, the following
assertions, in which \,$G$ \,is assumed to be a simply connected domain and \,$\varphi \in H(G,G)$:
\begin{enumerate}
\item[$\bullet$] (\cite{bernalmontes1995}) Let $(\varphi_n)_{n} := (z \mapsto a_n z + b_n)_{n } \subset {\rm Aut} (\C )$ $(a_n,b_n \in \C ; \, a_n \ne 0)$
                 \,and \,$(\psi_n)_{n } := \big( z \mapsto k_n {z -a_n \over 1 - \ovl{a_n}z} \big)_{n} \subset {\rm Aut} (\D )$ $(|a_n| < 1 = |k_n|)$. Then \,$(C_{\varphi_n})_n$ \,is hypercyclic if and only if \,$\limsup_{n \to \infty} \min \{ |b_n|,|b_n / a_n| \} = +\infty$
                 (Birkhoff's theorem \cite{birkhoff1929} is the special case \,$a_n = 0$, $b_n = an$, with $a \ne 0$), and
                 $(C_{\psi_n})_n$ is hypercyclic if and only if \,$\limsup_{n \to \infty} |a_n| = 1$.
\item[$\bullet$] The operator \,$C_\varphi$ \,is hypercyclic if and only if $\varphi \in H_{1-1}(G)$
                 \,and has no fixed point in $G$ (\cite{shapiro1993,shapiro2001}), and if and only if $\varphi \in H_{1-1}(G)$ and is runaway (\cite{grossemortini2009}). Moreover, if \,$(\varphi_n)_n \subset H_{1-1}(G)$, then \,$(C_{\varphi_n})_n$ \,is hypercyclic if and only if \,$(\varphi_n)_n$ \,is runaway (\cite{grossemortini2009}).
\item[$\bullet$] (\cite{bayartgrivaux2004,bayartgrivaux2006}) Each translation $\tau_a : f \in H(\C ) \mapsto f( \cdot \, + a) \in H(\C )$
$(a \in \C \setminus \{0\})$ is frequently hypercyclic. If \,$\varphi \in {\rm Aut} (\D )$ has no fixed point in $\D$,
then $C_\varphi : H(\D ) \to H(\D )$ is also frequently hypercyclic.
\item[$\bullet$] (\cite{bes2013}) The operator \,$C_\varphi$ \,is frequently hypercyclic if and only if it is hypercyclic, so if and only
                 if \,$\varphi \in H_{1-1}(G)$ \,and has no fixed point in $G$.
\item[$\bullet$] (\cite{bernal2013}) Let $(\varphi_n)_{n } := (z \mapsto a_n z + b_n)_{n } \subset {\rm Aut} (\C )$ $(a_n,b_n \in \C ; \, a_n \ne 0)$.
Assume that there is an unbounded nondecreasing sequence $(\omega_n )_{n } \subset (0,+\infty )$ satisfying
\,$\lim_{n \to \infty} (|b_n| - \omega_n |a_n|) = +\infty$ \,and
\,$|b_m - b_n| \ge \omega_{m-n} (|a_m| + |a_n|)$
\,for all $m,n \in \N$ with $m > n$.
Then \,$(C_{\varphi_n})_n$ \,is frequently hypercyclic on $H(\C )$.
Moreover, if $(b_n) \subset \C$ is a sequence with
$\lim_{k \to \infty} \inf_{n \in \N} |b_{n+k} - b_n| = +\infty$,
then the sequence of translations $(\tau_{b_n})$ is frequently hypercyclic on $H(\C )$.
\end{enumerate}

Recently, a new terminology has been coined with the aim of suggesting largeness in an algebraic sense.
This terminology enters the new subject of {\it lineability,} for whose background we refer the reader to the
survey \cite{bernalpellegrinoseoane2014} and the book \cite{aronbernalpellegrinoseoane}.
Assume that $X$ is a vector space and that $A \subset X$. The set $A$ is said to be {\it lineable} in $X$ provided that
there is an infinite dimensional vector space $M$ such that $M \subset A \cup \{0\}$. If $X$ is, in addition, a
topological vector space, then $A$ is called: {\it dense-lineable} in $X$  if there exists a dense
vector subspace $M \subset X$ such that \,$M \subset A \,\cup \,\{0\}$; {\it maximal dense-lineable} in $X$ if, further,
dim$(M)$ = dim$(X)$; {\it spaceable} in $X$ if there exists a closed infinite dimensional vector subspace
$M \subset X$ such that $M \subset A \cup \{0\}$.

\vskip 4pt

Let $X,Y$ be topological vector spaces, $T$ an operator on $X$ and $T_n : X \to Y$ $(n \in \N )$ be a sequence of continuous linear mappings.
By using the preceding language, it is known (see \cite{bes1999,bourdon1993,herrero1991,wengenroth2003}) that if $T$ is hypercyclic, then \,$HC(T)$ \,is always dense-lineable in $X$, even maximal dense-lineable if
\,$X$ \,is a Banach space
(in both cases, the existing dense subspace $M$ is $T$-invariant).
Sufficient conditions for $HC((T_n)_n)$ to be lineable or dense-lineable were furnished in \cite[Theorems 1--2]{bernal1999b}
(for maximal dense lineability of $HC((T_n)_n)$, see \cite{bernalordonez2014}).
In contrast to the property of dense-lineability, not every set \,$HC(T)$ \,is spaceable if \,$T$ is hypercyclic,
as Montes has shown in \cite{montes1996}.
Sufficient criteria for $HC(T)$ or $HC((T_n)_n)$ to be spaceable can be found in
\cite[Chapter 10]{grosseperis2011}, \cite[Section 4.5]{aronbernalpellegrinoseoane}
and the references contained in them. In particular, if \,$G \subset \C$ is a domain that is not
conformally equivalent to $\C \setminus \{0\}$ and $(\varphi_n)_n \subset {\rm Aut} (G)$ is a runaway sequence,
then $HC((C_{\varphi_n})_n)$ is spaceable in $H(G)$ \cite{bernalmontes1995a}; consequently, $HC(C_\varphi )$ is spaceable if
$\varphi \in {\rm Aut} (G)$ is runaway.
Assuming that \,$G$ \,is simply connected and \,$(\varphi_n)_n \subset H_{1-1}(G)$ \,is a runaway sequence, it can be extracted from
\cite[Theorem 3.2 and its proof]{grossemortini2009} together with \cite[Theorem 2]{bernal1999b} that
\,$HC((C_{\varphi_n})_n)$ is dense-lineable in $H(G)$.

\vskip 4pt

Concerning frequent hypercyclicity, Bayart and Grivaux \cite{bayartgrivaux2006} proved that if
$X$ is a separable F-space and
$T$ is a frequently hypercyclic operator on $X$, then \,$FHC(T)$ \,is dense-lineable
(again with $T$-invariance of the corresponding dense subspace). For conditions
for $FHC((T_n)_n)$ to be dense-lineable, see \cite{bernalordonez2014}.
As for the existence of large closed subspaces,
in \cite[Theorem 3]{bonillagrosse2012} a sufficient criterion for the spaceability of \,$FHC(T)$ \,is
given -- from which it is derived as an example
that $FHC(\tau_a)$ is spaceable in $H(\C )$ for each $a \in \C \setminus \{0\}$ -- while
Menet \cite[Theorem 2.12]{menet2015} provided a criterion to discover spaceability of \,$FHC((T_n)_n)$.
B\`es \cite{bes2013} proved in 2013 the spaceability in \,$H(G)$ \,of \,$FHC(C_\varphi )$ \,if it is assumed that \,$G \subset \C$
is a simply connected domain, $\varphi \in H_{1-1}(G)$ \,and \,$\varphi$ \,has no fixed point in $G$.

\vskip 4pt

As far as we know, there is not any general result about lineability of the fa\-mi\-ly of frequently hypercyclic functions
with respect to {\it sequences} \,of composition operators. The aim of this paper is to contribute to fill in this gap.
Specifically, we furnish sufficient conditions for a {\it sequence of composition operators} defined
on a given planar simply connected domain to be {\it frequently hypercyclic} and to enjoy the property that the family of its
corresponding frequently hypercyclic functions is not only nonempty but also it {\it contains vector spaces}
that are, in several senses, large. This will
be performed in Sections 2 and 3. In the final Section 4 a number of examples will be provided.

\section{Frequently hypercyclic sequences of composition operators}

\quad 
A sequence $(K_n)_{n }$ of compact subsets of a domain $G \subset \C$ is said to be
{\it exhaustive} \,if \,$G = \bigcup_{n = 1}^\infty K_n$ \,and \,$K_n \subset K_{n+1}^\circ$ \,for all \,$n \in \N$, where $A^\circ$ denotes the interior of $A$.
In particular, it satisfies that, given a compact set \,$K \subset G$, there exists \,$N \in \N$ \,such that \,$K \subset K_N$.
A compact set \,$K \subset \C$ \,is said to be \,{\it Mergelyan} \,whenever it lacks holes, that is, whenever
\,$\C \setminus K$ \,is connected. The symbol \,$\mathcal M (G)$ \,will stand for the family of Mergelyan compact subsets of \,$G$.
If $G$ is simply connected, an exhaustive sequence of compact subsets of \,$G$ \,satisfying \,$(K_n)_{n } \subset \mathcal M (G)$
\,always exists: see, e.g., \cite[Chap.~13]{rudin1987}.


\vskip 4pt

Given a compact subset $K\subset G$, $r > 0$ and a function $f \in H(G)$, by $\|f\|_K$ we mean the maximum of $|f(z)|$ over $K$, while
\,$B_K(f,r)$ \,will denote the set of all functions \,$h \in H(G)$ \,such that \,$\|h-f\|_K < r$.
The sets \,\,$B_K(f,r)$ \,form a base for the open sets of \,$H(G)$.

\vskip 4pt

We first give an easy necessary condition for a sequence \,$(\varphi_n)_n \subset H(G,G)$ \,to generate
a frequently hypercyclic sequence \,$(C_{\varphi_n})_n$ \,of composition operators.

\begin{proposition} \label{Prop: necessary condition}
If \,$(C_{\varphi_n})_n$ is frequently hypercyclic on \,$H(G)$, then \,$(\varphi_n)_n$ is {\em weakly frequently
runaway,} that is, for every compact set \,$K \subset G$, one has
$$
\underline{\rm dens}( \{n \in \N : \, K\cap \varphi_n (K) = \varnothing \} ) > 0.
$$
\end{proposition}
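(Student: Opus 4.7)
The plan is to pick a frequently hypercyclic function $f_0 \in FHC((C_{\varphi_n})_n)$ and exhibit a single open set $V \subset H(G)$ whose ``visiting set'' $\{n : C_{\varphi_n} f_0 \in V\}$ is automatically contained in $\{n : K \cap \varphi_n(K) = \varnothing\}$. The mechanism is to force the iterates to be uniformly large on $K$: if $f_0 \circ \varphi_n$ takes a large value at every point of $K$, then $\varphi_n(K)$ must avoid the region where $f_0$ is small, in particular avoid $K$ itself.

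More concretely, I would fix an arbitrary compact set $K \subset G$, set $M := \|f_0\|_K$, take $h \in H(G)$ to be the constant function $h \equiv M+1$, and choose $V := B_K(h, 1/2)$. This is a nonempty open set of $H(G)$, so by the definition of frequent hypercyclicity the set
$$
A := \{n \in \N : C_{\varphi_n} f_0 \in V\}
$$
has positive lower density. For every $n \in A$ and every $z \in K$ one has $|f_0(\varphi_n(z)) - (M+1)| < 1/2$, hence $|f_0(\varphi_n(z))| > M + 1/2$. On the other hand, if $K \cap \varphi_n(K) \ne \varnothing$, there exists $z \in K$ with $w := \varphi_n(z) \in K$, giving $|f_0(w)| \le \|f_0\|_K = M$, a contradiction. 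Consequently $A \subset \{n \in \N : K \cap \varphi_n(K) = \varnothing\}$, and the lower density of the larger set is at least that of $A$, which is positive.

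No genuine obstacle arises: the argument is essentially a one-line use of the definition, once one realises that the ``barrier'' between $K$ and $\varphi_n(K)$ can be detected by testing $f_0 \circ \varphi_n$ against a single constant function strictly larger than $\|f_0\|_K$. The only minor subtlety is the choice of $h$: it must satisfy $|h(z)| > \|f_0\|_K$ for \emph{every} $z \in K$ (not merely somewhere), which is why a constant slightly exceeding $\|f_0\|_K$ is the cleanest pick.
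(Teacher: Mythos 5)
Your proof is correct and is essentially the paper's own argument: both test the orbit of a frequently hypercyclic function against a small ball around the constant function $\|f_0\|_K + 1$ and observe that membership in that ball forces $K \cap \varphi_n(K) = \varnothing$. The only cosmetic difference is your radius $1/2$ versus the paper's radius $1$.
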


\begin{proof} Fix any compact set $K\subset G$. Let \,$n \in \N$ \,be a natural number such that \,$K\cap \varphi_n(K) \ne \varnothing$ \,and
pick \,$z_n \in K$ \,such that \,$\varphi_n (z_n) \in K$. Let \,$f\in FHC((C_{\varphi_n})_n)$ \,and
define \,$g$ \,as the constant function \,$g(z) :=  1+\|f\|_K$. Then
$$
\|g-C_{\varphi_n}f\|_K \ge |g(z_n)-f(\varphi_n(z_n))| \ge 1 + \|f\|_K - |f(\varphi_n(z_n))| \ge 1,
$$
where the facts \,$z_n \in K$ \,and \,$\varphi_n(z_n) \in K$ \,have been used. Hence
$$
\{ n \in \N: \, C_{\varphi_n} f \in B_K(g,1) \} \subset \{n \in \N : \, K\cap \varphi_n (K)  = \varnothing \}.
$$
But the frequent hypercyclicity of \,$f$ \,implies that the smaller set has positive lower density, so the bigger one too.
\end{proof}

The last proposition can suggest conditions guaranteeing that $FHC((C_{\varphi_n})_n) \ne \varnothing$. We restrict ourselves to the rather illustrative case
of simply connected domains. Neither the results nor the approaches in \cite{bes2013,bonillagrosse2012,menet2015}
will be used in our proofs.

\vskip 3pt

Prior to establish our main result in this section
(Theorem \ref{existence}, whose proof is inspired by the one of Theorem 4.1 in \cite{bernal2013}),
two auxiliary results are needed.

\begin{lemma} \label{Lemma splitting}
Any sequence of natural numbers with positive lower density can always be split
into infinitely many disjoint subsequences each of which has also positive lower density.
\end{lemma}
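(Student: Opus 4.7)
The plan is to transport a suitable partition of $\N$ to a partition of the given sequence via its enumeration map. Let $A = \{a_1 < a_2 < a_3 < \cdots\} \subset \N$ satisfy $\underline{\rm dens}(A) = d > 0$. First I would fix, once and for all, a partition $\N = \bigsqcup_{k \ge 1} S_k$ into infinitely many pairwise disjoint sets, each of positive lower density. A concrete and economical choice is to group the positive integers according to the exponent of $2$ in their factorization: set $S_k := \{2^{k-1}(2j-1) : j \in \N\}$. The $S_k$ are then pairwise disjoint, their union is $\N$, and an elementary count (the number of elements of $S_k$ in $[1,N]$ is $\lfloor N/2^{k-1}\rfloor - \lfloor N/2^k\rfloor$) yields $\underline{\rm dens}(S_k) = 2^{-k} > 0$ for every $k \in \N$.

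Next I would define $A_k := \{a_n : n \in S_k\}$ for each $k \in \N$. By construction the family $(A_k)_{k \ge 1}$ consists of pairwise disjoint subsequences of $A$ whose union is all of $A$. The only remaining task is to transfer the positive lower density of each $S_k$ inside $\N$ to a positive lower density of the corresponding $A_k$ inside $\N$. Setting $N(m) := |A \cap [1,m]|$, the hypothesis $\underline{\rm dens}(A) = d$ gives $N(m) \ge (d-\varepsilon) m$ for every $\varepsilon \in (0,d)$ and all sufficiently large $m$, while the tautology $|A_k \cap [1,m]| = |S_k \cap [1, N(m)]|$, together with $\underline{\rm dens}(S_k) = 2^{-k}$, yields $\underline{\rm dens}(A_k) \ge 2^{-k}(d-\varepsilon)$; letting $\varepsilon \to 0$ closes the argument.

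No step here is genuinely hard; the only point requiring a moment's care is the final bookkeeping that fuses two separate eventual lower-density estimates (one for $A$, one for each $S_k$) into a single lower-density bound for $A_k$. That is precisely why I would phrase the second step in terms of the counting function $N(m)$ rather than via the equivalent reciprocal bound $a_n \le n/(d-\varepsilon)$, since the former makes the composition of the two estimates immediate.
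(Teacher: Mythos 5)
Your proof is correct and follows essentially the same route as the paper: both partition the \emph{index set} of $A$ dyadically (your $S_k$ is exactly the paper's iterated odd/even splitting of the indices) and then transfer the density of the index classes to the corresponding subsequences. The only difference is bookkeeping --- the paper invokes the equivalence $\underline{\rm dens}(A)>0 \iff n_k \le Ck$ for the increasing enumeration, while you compose counting functions via $|A_k \cap [1,m]| = |S_k \cap [1,N(m)]|$ --- and both are sound.
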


\begin{proof}
Let $A = \{n_1 < n_2 < \cdots < n_k < \cdots \}$ be a sequence in $\N$ with $\underline{\rm dens} (A) > 0$. This means that there is $C > 0$ with $n_k \le Ck$ for all $k \in \N$. If we define $A_1 = \{m_k\}_{k \ge 1} := \{n_1 < n_3 < n_5 < \cdots \}$, then $m_k \le C(2k-1) \le 2Ck$ for all $k \in \N$, so that $\underline{\rm dens} (A_1) > 0$. Now, we split $A \setminus A_1$ into $A_2 = \{p_k\}_{k \ge 1}:= \{n_2 < n_6 < n_{10} < n_{14} < \cdots\}$ \,and
\,$A \setminus (A_1 \cup A_2) = \{n_{4k}\}_{k \ge 1}$. Then $p_k \le C(4k - 2) \le 4Ck$ for all $k \ge 1$, which yields \,$\underline{\rm dens} (A_2) > 0$. Then we divide
$\{n_{4k}\}_{k \ge 1}$ into two sequences with positive lower density. It is clear that this procedure gives pairwise disjoint sets $A_n \subset \N$ with
\,$\underline{\rm dens} (A_n) > 0$ for all $n \in \N$.
\end{proof}

Our second auxiliary assertion (Theorem \ref{Nersesjan} below) is known as \textit{Nersesjan's approximation theorem} and
can be found in \cite[Chapter 4]{gaier1987} (see also \cite{nersesjan1971}).

\begin{definition}
Let \,$G\subset \C$ \,be a domain.
A relatively closed set \,$F \subset G$ \,is said to be a {\it Carleman set of \,$G$} \,provided that the following conditions hold:
\begin{enumerate}
\item[\rm (i)] $G_\infty \setminus F$ \,is connected,
\item[\rm (ii)] $G_\infty \setminus F$ \,is locally connected at \,$\infty$,
\item[\rm (iii)] $F$ ``lacks long islands'', that is, for every compact set \,$K \subset G$ \,there is a
neighborhood \,$V$ \,of \,$\infty$ \,in \,$G_\infty$
\,such that no component of \,$F^\circ$ 
meets both \,$K$ \,and \,$V$.
\end{enumerate}
\end{definition}

Note that (ii) in the last definition is equivalent to the following (see \cite[p.~143]{gaier1987}): for each neighborhood \,$U$ of \,$\infty$ \,there exists a
neighborhood \,$V \subset U$ of \,$\infty$ such that each point $z_0 \in V \setminus (F \cup \{\infty \})$ can be connected in \,$U \setminus F$ with a point that is arbitrarily close to \,$\infty$.

\begin{theorem} \label{Nersesjan}
A relatively closed set \,$F \subset G$ \,is a Carleman set of \,$G$ \,if and only if for every function $f$
continuous on $F$ and holomorphic on $F^\circ$ and every positive and continuous
function \,$\varepsilon : F \to [0,+\infty)$, there is a function \,$g\in H(G)$ \,such that
$$|f(z)-g(z)|<\varepsilon(z) \hbox{ \ for all \ } z \in F.$$
\end{theorem}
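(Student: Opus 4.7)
My plan is to treat the two implications separately, with Arakelyan's uniform approximation theorem playing the role of workhorse for the forward direction. Recall that Arakelyan's theorem provides uniform (constant-error) approximation on a relatively closed set satisfying only (i) and (ii); the extra strength of Nersesjan's statement is that the error bound $\varepsilon(z)$ may decay toward infinity along $F$, and the role of condition (iii) is precisely to make this tangential refinement possible.

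For the sufficiency direction (Carleman $\Rightarrow$ tangential approximation), I would fix an exhaustive sequence $(K_n)_{n \ge 1}$ of compact subsets of $G$ with $K_n \subset K_{n+1}^\circ$ and set $K_0 := \varnothing$, and then construct inductively a sequence $(g_n) \subset H(G)$ satisfying, for $\eta_n := \min\{\varepsilon(z) : z \in F \cap K_n\}$, the invariants $|g_n(z) - f(z)| < \tfrac{1}{2}\varepsilon(z)(1 - 2^{-n})$ on $F \cap K_n$ and $\|g_n - g_{n-1}\|_{K_{n-1}} < 2^{-n}\eta_n$. To perform the inductive step, I would set $F_n := F \cup K_{n-1}$; condition (iii) ensures that every component of $F^\circ$ meeting $K_{n-1}$ stays away from some neighbourhood of $\infty$, which decouples the components of $F^\circ$ meeting $K_{n-1}$ from those that escape to infinity. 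This makes it possible to define on $F_n$ a function which equals $f - g_{n-1}$ on the ``escaping'' portion of $F$ and equals $0$ on $K_{n-1}$, continuous on $F_n$ and holomorphic on $F_n^\circ$. Conditions (i) and (ii) transfer from $F$ to $F_n$, so Arakelyan's theorem furnishes $h_n \in H(G)$ uniformly close to this hybrid function on $F_n$; setting $g_n := g_{n-1} + h_n$ preserves both invariants. The second invariant forces $(g_n)$ to be Cauchy in the compact-open topology, so the limit $g \in H(G)$ exists and satisfies $|g - f| < \varepsilon$ on $F$.

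For the necessity direction, specialising to constant $\varepsilon$ reduces to the classical Arakelyan situation and yields (i) and (ii) directly. For (iii) I would argue by contradiction: if some compact $K \subset G$ admits, for every neighbourhood $V$ of $\infty$ in $G_\infty$, a component of $F^\circ$ meeting both $K$ and $V$, select a nested sequence $V_m \downarrow \{\infty\}$ with corresponding components $\Omega_m$, prescribe $f$ to take a fixed nonzero value on each $\Omega_m$ and the value $0$ on a remote portion of $F$, and let $\varepsilon$ decay along $\Omega_m$ so rapidly that any $g \in H(G)$ with $|g - f| < \varepsilon$ on $F$ must be very small deep inside each $\Omega_m$. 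Applying the maximum principle inside $\Omega_m$ then forces $g$ to remain small on the part of $\Omega_m$ meeting $K$, contradicting the prescribed value there.

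The hardest point is the inductive step of sufficiency: producing the hybrid function on $F \cup K_{n-1}$ that is simultaneously continuous, holomorphic on the interior, and to which Arakelyan's theorem applies is exactly where condition (iii) enters, by allowing the ``local'' and ``escaping'' portions of $F$ to be separated at the level of components of $F^\circ$. Verifying that (i) and (ii) are inherited by the enlargement $F \cup K_{n-1}$, and bookkeeping the tangential estimate so that the bound $\varepsilon(z)$ is respected on the annular piece $K_n \setminus K_{n-1}$ at every stage, is where most of the technical weight of the proof concentrates.
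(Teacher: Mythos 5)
This statement is not proved in the paper at all: Theorem \ref{Nersesjan} is quoted as a known classical result (Nersesjan's approximation theorem) with a pointer to \cite[Chapter 4]{gaier1987} and to \cite{nersesjan1971}, so there is no in-paper argument to compare yours against. Your sketch does reproduce the architecture of the standard textbook proof -- Arakelyan's theorem as the engine, an exhaustion $(K_n)$, and an inductive correction scheme whose increments are small on $K_{n-1}$ while repairing the error on the far part of $F$ -- and the bookkeeping of your two invariants (so that the telescoped limit still respects $\varepsilon(z)$ on each annulus $K_{n+1}\setminus K_n$, using $\eta_m\le\varepsilon(z)$ for $z\in F\cap K_m$) is sound.

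As a proof, however, the sketch has two genuine gaps, both at the points you flag as hard. First, in the inductive step the ``hybrid'' function must be defined on \emph{all} of $F\cup K_{n-1}$, not only on $K_{n-1}$ and the escaping part of $F$; the standard device is to multiply $f-g_{n-1}$ by a continuous cut-off $\chi$ on $F\cup K_{n-1}$ that is \emph{locally constant on $F^\circ$} (which is what keeps the product holomorphic on the interior), and condition (iii) is precisely what allows the components of $F^\circ$ to be assigned consistent constant values of $\chi$; one must also verify that $F\cup K_{n-1}$ again satisfies hypotheses (i)--(ii) of Arakelyan's theorem, which requires the $K_n$ to be chosen holomorphically convex in $G$. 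None of this is carried out, and the continuity of $\chi$ at boundary points shared by several components of $F^\circ$ is itself delicate. Second, and more seriously, your necessity argument for (iii) does not work as stated: the maximum principle cannot propagate smallness from points deep inside a component $\Omega_m$ of $F^\circ$ back to $\Omega_m\cap K$, since a holomorphic function that is small at some interior points need not be small elsewhere. The correct mechanism is a two-constants/Phragm\'en--Lindel\"of type estimate, interpolating between the crude bound $\sup_F\varepsilon$ on $\Omega_m$ and the tiny bound on its far portion via harmonic measure; this forces $\varepsilon$ to be chosen to decay fast \emph{relative to that harmonic measure}, and $f$ must be chosen so that the resulting conclusion (that $g-f$ is small, or vanishes identically, on $\Omega_m$) actually contradicts something -- for instance $f$ taking prescribed incompatible values, or $f$ not extending holomorphically across the island. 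As written, this half of the equivalence is not established.
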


We say that a family \,$\mathcal F$ \,of subsets of a given set \,$X$ \,is {\it pairwise disjoint}
\,if \,$A \cap B = \varnothing$ \,for every pair of distinct \,$A,B \in \mathcal F$.
If \,$A \subset \C$ \,then \,$\partial A$ \,and \,$\partial_\infty A$ \,will stand for the boundary of \,$A$ \,in \,$\C$
\,and in \,$\C_\infty$, respectively, so that \,$\partial_\infty A$ \,equals either \,$\partial A$ \,or \,$\{\infty \} \cup \partial A$, depending
on whether \,$A$ \,is bounded or not.

\vskip 3pt

In order to formulate appropriately our criterion for frequent hypercyclicity, let us introduce the next concept.

\begin{definition}\label{Def: strongly freq run}
Let \,$G \subset \C$ \,be a domain and \,$(\varphi_n) \subset H(G,G)$. We say that $(\varphi_n)$ is {\em strongly frequently runaway} provided that there exists an exhaustive sequence \,$(K_\nu)_{\nu } \subset \mathcal M (G)$ \,as well as a family \,$\{A(\nu ):\,\nu \in\N\}$ \,of subsets of \ $\N$ \,satisfying the following conditions:
\begin{enumerate}
\item[\rm (P1)] $\underline{\rm dens} \, (A(\nu )) > 0$ \,for all \,$\nu \in \N$.
\item[\rm (P2)] Each of the families \,$\{A(\nu );\,\nu \in\N\}$ \,and \,${\mathcal K} := \{\varphi_n (K_\nu): \, n \in A(\nu ), \, \nu \in \N\}$ \,is pairwise disjoint.
\item[\rm (P3)] Given a compact subset \,$K \subset G$ \,there are only finitely many \,$L \in \mathcal K$ \,with \,$K \cap L \ne \varnothing$.
\end{enumerate}
\end{definition}

Note that thanks to (P3) the notion introduced in the preceding definition is stronger than the one of a weakly frequently runaway sequence given in Proposition \ref{Prop: necessary condition}.

\begin{theorem} \label{existence}
Let \,$G \subset \C$ \,be a simply connected domain and \,$(\varphi_n)_n$ be a strongly frequently runaway sequence in \,$H_{1-1}(G)$.
Then the sequence \,$(C_{\varphi_n})_n$ \,is frequently hypercyclic. In other words, the set $FHC((C_{\varphi_n})_n)$ is not empty.
\end{theorem}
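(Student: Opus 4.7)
My plan is to adapt the strategy of \cite[Theorem 4.1]{bernal2013}, to which the authors refer: I build a single $f\in H(G)$ by tangential (Nersesjan-type) approximation on a carefully chosen closed subset of $G$, and arrange that, for each element $p_j$ of a dense countable subset of $H(G)$ and each $\nu\in\N$, the composition $C_{\varphi_n}f$ lies in a prescribed basic neighborhood $B_{K_\nu}(p_j,\delta)$ along a set of $n$ of positive lower density.

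The setup proceeds as follows. Fix a dense sequence $(p_j)_{j\ge 1}$ in $H(G)$ (available by separability). Using Lemma \ref{Lemma splitting}, split each $A(\nu)$ into pairwise disjoint subsets $A(\nu,1),A(\nu,2),\dots$, each of positive lower density. Put $L_{\nu,n}:=\varphi_n(K_\nu)$ for $n\in A(\nu)$ and $F:=\bigcup_{\nu\in\N}\bigcup_{n\in A(\nu)}L_{\nu,n}$; by (P2) this is a disjoint union. Since $\varphi_n\in H_{1-1}(G)$, the inverse $\varphi_n^{-1}$ is holomorphic on $\varphi_n(G)$, so the prescription
\[
  \tilde{f}\big|_{L_{\nu,n}}:=p_j\circ\varphi_n^{-1}\qquad(n\in A(\nu,j))
\]
defines a target on $F$. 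By (P3) combined with (P2), each $L_{\nu,n}$ is clopen in $F$, so $\tilde{f}$ is continuous on $F$ and holomorphic on $F^\circ$, $F$ is relatively closed in $G$, and one can choose a positive continuous $\varepsilon:F\to(0,\infty)$ with $\varepsilon\le 1/n$ on $L_{\nu,n}$.

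I would then verify that $F$ is a Carleman set of $G$ and apply Theorem \ref{Nersesjan} to produce $f\in H(G)$ with $|f-\tilde{f}|<\varepsilon$ on $F$. The frequent hypercyclicity of $f$ follows painlessly: given any nonempty open $V\subset H(G)$, by density of $(p_j)_j$ and exhaustivity of $(K_\nu)_\nu$ choose $\nu_0,j_0$ and $\delta>0$ with $B_{K_{\nu_0}}(p_{j_0},\delta)\subset V$; for every $n\in A(\nu_0,j_0)$ with $n>1/\delta$, the change of variable $w=\varphi_n(z)$ gives
\[
  \|C_{\varphi_n}f-p_{j_0}\|_{K_{\nu_0}}=\|f-p_{j_0}\circ\varphi_n^{-1}\|_{L_{\nu_0,n}}\le 1/n<\delta,
\]
so $C_{\varphi_n}f\in V$; discarding the finitely many small $n$ does not alter the lower density of $A(\nu_0,j_0)$.

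The main obstacle will be the verification that $F$ satisfies all three Carleman conditions. Condition (iii) (``lack of long islands'') is essentially free from (P3), since components of $F^\circ$ sit inside individual compacts $L_{\nu,n}$, of which only finitely many meet any given compact $K\subset G$. Conditions (i) and (ii), namely the connectivity of $G_\infty\setminus F$ and its local connectivity at $\infty$, are the delicate part: one first checks that each $L_{\nu,n}$ is Mergelyan in $\C$, which I plan to extract from univalence of $\varphi_n$ on the simply connected $G$ by writing $\C\setminus L_{\nu,n}=(\varphi_n(G)\setminus L_{\nu,n})\cup(\C\setminus\varphi_n(G))$ and noting that both pieces are connected and meet along $\partial\varphi_n(G)$; then, exploiting simple connectivity of $G$ together with the locally finite, pairwise disjoint nature of the family $\{L_{\nu,n}\}$, one rules out trapped components of $G\setminus F$ and of $G\setminus(F\cup K)$ for compact $K$. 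This is precisely where the simple connectivity hypothesis on $G$ and the full strength of (P1)--(P3) are used.
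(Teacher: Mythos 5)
Your proposal is correct and follows essentially the same route as the paper's proof: split each $A(\nu)$ via Lemma~\ref{Lemma splitting}, form the locally finite disjoint union $F=\bigcup_{\nu}\bigcup_{n\in A(\nu)}\varphi_n(K_\nu)$ of hole-free compacta (injectivity preserving the Mergelyan property), verify the Carleman conditions from (P2)--(P3), prescribe $P_l\circ\varphi_n^{-1}$ on the appropriate pieces, and invoke Theorem~\ref{Nersesjan}. The only cosmetic difference is your error function $\varepsilon\le 1/n$ on the $n$-th piece, where the paper takes $\varepsilon(z)=\chi(z,\partial_\infty G)$ and uses (P3) to force $\sup_{z\in K_\nu}\varepsilon(\varphi_n(z))\to 0$; both choices yield the same conclusion.
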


\begin{proof}
Let $(P_l)_l$ be a dense sequence of $H(G)$, for instance the sequence of polynomials whose coefficients
have rational real and imaginary parts. Divide each set \,$A(\nu)$ \,into infinitely many disjoint subsets
\,$A(\nu,l)$ $(l \in \N )$, each of them with positive lower density. This is possible thanks to (P1) and Lemma \ref{Lemma splitting}.

\vskip 4pt

Now, we use the fact that if \,$K$ \,is a compact set with \,$N$ \,holes and \,$\varphi$ \,is an injective holomorphic mapping
on a neighborhood of \,$K$, then \,$\varphi (K)$ \,also has \,$N$ \,holes (see \cite[p.~276]{remmert1991}).
In particular, if \,$K \in \mathcal M (G)$, then \,$\varphi (K)$ \,lack holes. Define the set
$$
F := \bigcup_{\nu\in\N} \bigcup_{n \in A(\nu)} \varphi_n (K_\nu).
$$
From (P2), the sets \,$\varphi_n(K_\nu)$ \,are pairwise disjoint compact subsets of $G$ having no holes.
From (P3), it follows that they escape to the boundary of \,$G$. Therefore \,$F$ \,is closed in \,$G$, and \,$G_\infty \setminus F$ \,is connected
as well as locally connected at \,$\infty$. The components of \,$F^\circ$ \,are the sets \,$(\varphi_n (K_\nu))^\circ$.
Given a compact subset \,$K \subset G$ \,there are by (P3) only finitely many \,$L_1, \dots, L_N \in \mathcal K$ with
\,$K \cap L_j \ne \varnothing$ $(j = 1, \dots ,N)$. Then \,$V := \{\infty\} \cup (G \setminus (K \cup L_1 \cup \cdots \cup L_N))$
\,is a neighborhood \,of \,$\infty$ \,in \,$G_\infty$ \,satisfying that no component of \,$F^\circ$ \,meets both \,$K$ \,and \,$V$.
Hence \,$F$ \,is a Carleman set of \,$G$.
Now define \,$g : F\to \C$ \,as follows:
$$
g(z) := P_l(\varphi_n^{-1}(z)) \,\, \text{ if } \,z \in \varphi_n(K_\nu),\, n\in A(\nu,l) \, \text{ and } \, l,\nu \in \N .
$$
It is obvious that \,$g$ \,is continuous on \,$F$ \,and holomorphic in \,$F^\circ$. Hence, by Nersesjan's Theorem
(Theorem \ref{Nersesjan}), there exists \,$f\in H(G)$ \,such that \,$|f(z)-g(z)| < \varepsilon(z)$ \,for \,$z\in F$, where
$\varepsilon : G \to [0,+\infty)$ \,is a function that goes to zero as \,$z$ \,approaches the boundary of \,$G$
(for instance, take \,$\varepsilon(z) := \chi (z,\partial_\infty G)$, where \,$\chi$ \,denotes the chordal distance on \,$\C_\infty$).

\vskip 4pt

We claim that \,$f$ \,is frequently hypercyclic for \,$(C_{\varphi_n})_n$. Indeed, fix \,$l,\nu\in\N$ \,and \,$z\in K_\nu$.
Then, for \,$n\in A(l,\nu)$ \,we have
\begin{equation}\label{eq1}
|C_{\varphi_n}(f)(z) - P_l(z)| = |f(\varphi_n(z)) - g(\varphi_n(z))| < \varepsilon(\varphi_n(z)).
\end{equation}
But (P3) implies that \,$\varphi_n (z)$ \,tends to the boundary as \,$n\to\infty$
\,uniformly on \,$K_\nu$, so \,$\sup_{z \in K_\nu} \varepsilon (\varphi_n (z)) \to 0$. Hence, given \,$\delta > 0$,
there is \,$n_0 = n_0 (\nu ) \in\N$ \,such that \,$\sup_{z \in K_\nu} \varepsilon (\varphi_n (z)) < \delta$ \,for all \,$n \ge n_0$. This, together with \eqref{eq1}, gives us
that for any \,$l,\nu\in\N$, $z \in K_\nu$
\,and \,$n \in A(l,\nu) \setminus\{1,\dots,n_0\}$, we have
\begin{equation*}
|C_{\varphi_n}(f)(z)-P_l(z)| < \delta .
\end{equation*}
Since \,$\underline{\rm dens} \, (A(l,\nu) \setminus\{1,\dots,n_0\}) = \underline{\rm dens} \,( A(l,\nu) ) > 0$ \,and the sets
\,$B_{K_\nu}(P_l,\delta)$ 
\,form a base for the open sets of \,$H(G)$, the frequent hypercyclicity of \,$f$ \,is proved.
\end{proof}

The assumptions of Theorem \ref{existence} imply the frequent hypercylicity of the sequence of composition operators. In the next section we will establish
that, under the same conditions, the set $FHC((C_{\varphi_n})_n)$ enjoys a large algebraic size.

\vskip 4pt

In view of Proposition \ref{Prop: necessary condition} and Theorem \ref{existence}, we want to pose here the natural problem of \,{\it characterization} \,of the
frequent hypercyclicity of the sequence $(C_{\varphi_n})$ in terms of properties of $(\varphi_n)$. Note that, contrarily to the case of the iterates of
{\it one} composition operator (see \cite{bes2013} and Section 1), hypercyclicity and frequent hypercyclicity are {\it not} \,equivalent for a general sequence $(C_{\varphi_n})$. Indeed, consider a simply connected domain $G \subset \C$ and take a \,$\varphi \in H_{1-1}(G)$ without fixed points, and then define $(\varphi_n )$ as
$\varphi_{2^k} = \varphi^k$ (the compositional $k$th-iterate of $\varphi$) for $k \in \N$, and $\varphi_k(z) = z$ for $k \in \N \setminus \{2^k: \, k \in \N \}$.
It is clear that $(C_{\varphi_n})$ is hypercyclic but not frequently hypercyclic. Other --still unsolved-- questions concerning the frequent hypercyclicity of sequences of composition operators on $H(\C )$ defined by similarities were posed in \cite[Remark 4.3.2]{bernal2013}.

\section{Vector subspaces of frequently hypercyclic sequences of composition operators}

\quad As we have mentioned earlier, under the condition of being strongly frequently runaway we get a high degree of
algebraic genericity. This will be shown in the next two theorems.

\vskip 4pt

We first consider spaceability.
The following technical result 
will be needed in the proof. 
Recall that two basic sequences
\,$(x_n)_{n }, \, (y_n)_{n }$ \,in a Banach space \,$(X, \| \cdot \|)$ \,are said to be {\it equivalent} if, for every sequence
\,$(a_n)_{n }$ \,of scalars, the series \,$\sum_{n=1}^\infty a_n
x_n$ \,converges if and only if the series \,$\sum_{n=1}^\infty a_n y_n$
converges. This happens (see \cite{beauzamy1982}) if and only if there exist
two constants \,$m,M \in (0,+\infty )$ \,such that
$$
m \Big\| \sum_{j=1}^J a_j x_j \Big\| \leq \Big\| \sum_{j=1}^J a_j y_j \Big\| \leq
M \Big\| \sum_{j=1}^J a_j x_j \Big\|
$$
for all scalars $a_1, \dots ,a_J$ and all $J \in \N$. By using the
first inequality, we are easily led to the next lemma, whose proof can be found in \cite[Lemma 2.1]{bernal2006}.
By \,$L^2 (\T )$ \,we denote the Hilbert space of all Lebesgue-measurable
functions \,$f : \T \to \C$
\,with finite quadratic
norm \,$\|f\|_2 = \big( \int_0^{2\pi}|f(e^{i\theta})|\,{d\theta \over 2\pi} \big)^{1/2}$.
It is well known that the
sequence \,$(z^n)_{n }$ \,is a basic sequence in \,$L^2 ( \T )$.

\begin{lemma} \label{Lemma: basic sequences}
Assume that \,$G$ \,is a domain with \,$\overline{\D} \subset G$ \,and that
\,$(f_j)_{j } \subset H(G)$ \,is a sequence such that it is a
basic sequence in \,$L^2 (\T )$ \,that is equi\-va\-lent to
\,$(z^j)_{j }$. If \,$\Big(h_l := \sum_{j=1}^{J(l)} c_{j,l}
f_j\Big)_{l }$ \,is a sequence in \,${\rm span}\{f_j:\, {j \geq
1}\}$ \,converging in \,$H(G)$, then we have $$\dis{\sup_{l \in \N} \sum_{j=1}^{J(l)}
|c_{j,l}|^2} < +\infty.$$
\end{lemma}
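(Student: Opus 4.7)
The plan is to translate the $H(G)$-convergence of $(h_l)_l$ into a uniform $L^2(\T)$-bound on the functions $h_l$, and then convert that bound into the required $\ell^2$-bound on the coefficients $(c_{j,l})_j$ by combining the $(z^j)$--$(f_j)$ equivalence with the orthonormality of the trigonometric system on $\T$.

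First I would note that any sequence converging in $H(G)$ is bounded with respect to uniform convergence on compact subsets of $G$. Since $\overline{\D} \subset G$, the circle $\T$ lies inside a compact subset of $G$, so there is a constant $C_1 > 0$ such that $\|h_l\|_\T \le C_1$ for every $l \in \N$. Because $\|\cdot\|_{L^2(\T)} \le \|\cdot\|_\T$ (with respect to the normalized Lebesgue measure on $\T$), this yields
\[
\sup_{l \in \N} \|h_l\|_{L^2(\T)} \le C_1.
\]

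Next I would apply the equivalence between $(f_j)_j$ and $(z^j)_j$ as basic sequences in $L^2(\T)$. According to the ``first inequality'' recalled immediately before the statement, there exists $m \in (0,+\infty)$ such that
\[
m \bigg\| \sum_{j=1}^{J(l)} c_{j,l}\, z^j \bigg\|_{L^2(\T)} \le \bigg\| \sum_{j=1}^{J(l)} c_{j,l}\, f_j \bigg\|_{L^2(\T)} = \|h_l\|_{L^2(\T)},
\]
where the last identity uses that $h_l$ and the partial sum are equal as holomorphic functions on a neighborhood of $\overline{\D}$, hence a fortiori as elements of $L^2(\T)$. Invoking now the orthonormality of $(z^j)_j$ in $L^2(\T)$ gives
\[
\bigg\| \sum_{j=1}^{J(l)} c_{j,l}\, z^j \bigg\|_{L^2(\T)}^2 = \sum_{j=1}^{J(l)} |c_{j,l}|^2,
\]
and chaining the three displays yields $\sup_{l \in \N} \sum_{j=1}^{J(l)} |c_{j,l}|^2 \le (C_1/m)^2 < +\infty$, which is the claim.

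The argument is a short chain of standard reductions, so I do not anticipate any serious obstacle. The only point that requires attention is the \emph{direction} of the equivalence inequality: one must use precisely the version that controls the $L^2$-norm of the $(z^j)$-expansion by the $L^2$-norm of the corresponding $(f_j)$-expansion, i.e.\ the lower bound in the equivalence, rather than the upper one.
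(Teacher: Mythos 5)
Your proof is correct and follows exactly the route the paper intends: it invokes the ``first inequality'' of the equivalence with $(z^j)_j$, combines it with the orthonormality of $(z^j)_j$ in $L^2(\T)$ and the uniform boundedness on $\T$ of a sequence convergent in $H(G)$; this is the same argument the paper outsources to \cite[Lemma~2.1]{bernal2006}. Your closing remark about using the correct (lower) side of the equivalence is precisely the point the paper flags with the phrase ``by using the first inequality.''
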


\begin{theorem}\label{spaceable_thm}
Let \,$G \subset \C$ \,be a simply connected domain and \,$(\varphi_n)_n$ be a strongly frequently runaway sequence in $H_{1-1}(G)$. Then the set \,$FHC((C_{\varphi_n})_n)$ is spaceable in \,$H(G)$.
\end{theorem}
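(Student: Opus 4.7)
The approach is to produce a basic sequence $(f_j)_{j \ge 1} \subset H(G)$ whose $\T$-restrictions are equivalent in $L^2(\T)$ to $(z^j)_{j \ge 1}$, and to verify that its closed linear span $M$ in $H(G)$ is contained, except for $0$, in $FHC((C_{\varphi_n})_n)$. After a harmless affine change of coordinates we may assume $\overline{\D} \subset G$. Using the exhaustion $(K_\nu)$ and the sets $A(\nu)$ given by the hypothesis, iterated application of Lemma \ref{Lemma splitting} yields a decomposition of each $A(\nu)$ into pairwise disjoint subsets $A(\nu,l,j)$ ($l,j \in \N$), each of positive lower density; by (P3) only finitely many of the blocks $\varphi_n(K_\nu)$ can meet $\overline{\D}$, so after discarding finitely many indices (preserving positive lower density) we may assume every remaining block is disjoint from $\overline{\D}$. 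Fix a dense sequence $(P_l) \subset H(G)$.

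For each $j$ define, on
\[
F := \overline{\D} \,\cup\, \bigcup_{\nu,l,j' \in \N}\,\bigcup_{n \in A(\nu,l,j')} \varphi_n(K_\nu),
\]
the target $g_j$ equal to $z^j$ on $\overline{\D}$, to $P_l \circ \varphi_n^{-1}$ on the ``diagonal'' block $\varphi_n(K_\nu)$ with $n \in A(\nu,l,j)$, and identically $0$ on every other (``off-diagonal'') block. Exactly as in the proof of Theorem \ref{existence} -- using injectivity of $\varphi_n$ to keep each $\varphi_n(K_\nu)$ Mergelyan, pairwise disjointness from (P2), the escape-to-boundary from (P3), and the isolation of $\overline{\D}$ -- $F$ is a Carleman set of $G$ and $g_j$ is continuous on $F$ and holomorphic on $F^\circ$. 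Fix a continuous $\rho \colon G \to (0,\infty)$ with $\rho \le 1$ on $\overline{\D}$ and $\rho(z) \to 0$ as $z \to \partial_\infty G$, and apply Theorem \ref{Nersesjan} to get $f_j \in H(G)$ with $|f_j - g_j| < 2^{-j}\rho$ on $F$. Since $\|f_j - z^j\|_{L^2(\T)} \le 2^{-j}$, a standard perturbation result (Krein--Milman--Rutman type) yields that $(f_j)$ is a basic sequence in $L^2(\T)$ equivalent to $(z^j)$; in particular the $f_j$ are linearly independent in $H(G)$, so $M := \overline{\mathrm{span}}\{f_j : j \in \N\}$ is a closed infinite-dimensional subspace of $H(G)$.

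To show $M \setminus \{0\} \subset FHC((C_{\varphi_n})_n)$, take $f \in M \setminus \{0\}$ and write $f = \lim_m h_m$ in $H(G)$ with $h_m = \sum_{j=1}^{J(m)} c_{j,m} f_j$. Lemma \ref{Lemma: basic sequences} gives $S := \sup_m \sum_j |c_{j,m}|^2 < \infty$, and since restriction to $\T$ is continuous from $H(G)$ to $L^2(\T)$, the basic sequence property forces $c_{j,m} \to c_j$ for each $j$, with $\sum c_j^2 < \infty$ and some $c_{j_0} \ne 0$. Given $\nu \in \N$, $Q \in H(G)$ and $\delta > 0$, choose $l$ with $\|c_{j_0} P_l - Q\|_{K_\nu} < \delta/2$. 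For $n \in A(\nu,l,j_0)$ and $z \in K_\nu$, the definitions of $g_j$ on $\varphi_n(K_\nu)$ give
\[
\bigl|h_m(\varphi_n(z)) - c_{j_0} P_l(z)\bigr| \le \rho(\varphi_n(z)) \sum_{j=1}^{J(m)} |c_{j,m}|\,2^{-j} + |c_{j_0,m}-c_{j_0}|\, \|P_l\|_{K_\nu},
\]
and Cauchy--Schwarz bounds the first sum by $(S \sum_j 4^{-j})^{1/2}$ uniformly in $m$. Letting $m \to \infty$ yields $\|C_{\varphi_n}(f) - c_{j_0} P_l\|_{K_\nu} \le C \|\rho \circ \varphi_n\|_{K_\nu}$ for a constant $C = C(f)$. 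By (P3) the right side tends to $0$, so outside a finite initial segment of $A(\nu,l,j_0)$ we have $\|C_{\varphi_n}(f) - Q\|_{K_\nu} < \delta$; this set has positive lower density and the $B_{K_\nu}(Q,\delta)$ form a base of $H(G)$, so $f \in FHC((C_{\varphi_n})_n)$.

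The main technical hurdle is the unified application of Nersesjan's theorem: in a single stroke, $f_j$ must be close to $z^j$ on $\T$ (giving the $L^2$-equivalence, hence a closed infinite-dimensional span together with Lemma \ref{Lemma: basic sequences}), close to the dedicated target $P_l \circ \varphi_n^{-1}$ on the $j$-diagonal blocks (delivering the frequent hypercyclicity), and close to $0$ on all off-diagonal blocks -- with geometrically small error $2^{-j}\rho$ so that the tail contribution of $f = \sum_j c_j f_j$ collapses via Cauchy--Schwarz. This forces the somewhat intricate set $F$, whose Carleman property, although routine to verify, is the subtlest piece of the argument.
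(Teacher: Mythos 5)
Your proposal is correct and is essentially the paper's own proof: the same Carleman set anchored at $\overline{\D}$, the same diagonal/off-diagonal targets $z^j$, $P_l\circ\varphi_n^{-1}$ and $0$ handed to Nersesjan's theorem with geometrically decaying errors, the same basis-perturbation argument making $(f_j)$ a basic sequence in $L^2(\T)$ equivalent to $(z^j)$, and the same combination of Lemma \ref{Lemma: basic sequences} with Cauchy--Schwarz to kill the off-diagonal tail (your device of aiming at $c_{j_0}P_l$ and using density of $\{c_{j_0}P_l:\,l\in\N\}$, rather than normalizing the first nonzero coefficient to $1$ as the paper does, is only a cosmetic variant). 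One detail to tighten: with errors $2^{-j}\rho$ the perturbation criterion $\sum_{j}\|e_j^*\|_2\,\|f_j-z^j\|_2<1$ is met only marginally --- the bound $\|f_j-z^j\|_{L^2(\T)}\le 2^{-j}$ as you state it yields $\le 1$ rather than $<1$, so you should either invoke the strict inequality coming from Theorem \ref{Nersesjan} or simply use $3^{-j}$ as the paper does.
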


\begin{proof}
Let \,$(K_\nu)_{\nu }\subset{\mathcal M}(G)$ and \,$\{A(\nu ):\,{\nu \in\N}\}$ be, respectively, an exhaustive sequence of compact sets and a countable family of sets of positive integers given by the strongly frequent runawayness of $(\varphi_n)_n$.

\vskip 4pt

Without loss of generality, we can assume that \,$\overline{\D} \subset K_1\subset G$, because frequent hypercyclicity is stable
under translations and dilations. We are going to modify the proof of Theorem \ref{existence} appropriately to get spaceability.

\vskip 4pt

Let \,$(P_l)_l$ \,be a dense sequence of \,$H(G)$ \,and divide each set \,$A(\nu)$ \,into infinitely many disjoint subsets \,$A(\nu,l,p)$ (${l,p\in\N}$), each of them having positive lower density. Observe that condition (P3) of Definition \ref{Def: strongly freq run} implies the existence of \,$k_1 \in \N$ \,such that
\,$K_1\cap \varphi_n(K_\nu)=\varnothing$ \,for all \,$\nu\ge k_1$ \,and all \,$n\in A(\nu)$.
Similarly to the proof of Theorem \ref{existence}, it follows from conditions (P2) and (P3) that the set
$$
F := K_1 \cup \bigcup_{\nu\ge k_1} \bigcup_{n\in A(\nu)} \varphi_n(K_\nu)
$$
is a Carleman set of $G$.

\vskip 4pt

Now, for each $\mu\in\N$, let us define the function \,$g_\mu:F\to \C$ \,by
$$
g_\mu(z):=\begin{cases}z^\mu& \text{ if }z\in K_1\\
P_l(\varphi_n^{-1}(z))&\text{ if }z\in\varphi_n(K_\nu),\, \nu\ge k_1,\, n\in A(\nu,l,\mu),\, l\in\N\\
0&\text{ if }z\in\varphi_n(K_\nu),\, \nu\ge k_1,\, n\in A(\nu,l,p),\, p\ne\mu,\, l\in\N.
\end{cases}
$$
It is plain that each \,$g_\mu$ \,is continuous on \,$F$ \,and holomorphic in \,$F^\circ$.
Consequently, by Nersesjan's Theorem (Theorem \ref{Nersesjan}), there exists a function \,$f_\mu\in H(G)$ \,such that
\begin{equation}\label{spaceable}
|f_\mu (z) - g_\mu (z)| < \frac{1}{3^\mu} \cdot \min\{1,\varepsilon(z)\} \, \text{ for all } z \in F,
\end{equation}
where, again, \,$\varepsilon(z)=\chi(z,\partial_\infty G)$ \,as in the proof of Theorem \ref{existence}.

\vskip 4pt

We claim that the closed linear space generated by $(f_\mu)_\mu$ in $H(G)$, namely,
$$
M := \overline{\rm span} \, \{f_\mu :\, \mu \in \N\} ,
$$
is an infinite dimensional closed vector space consisting, except for zero, of frequently hypercyclic functions for
\,$(C_{\varphi_n})_{n }$.

\vskip 4pt

With this aim, observe first that, since \,$\overline{\D} \subset K_1$, we have \,$|f_\mu (z) - z^\mu |< \frac{1}{3^\mu}$
\,for all \,$z \in \T$.
Let \,$(e_\mu^*)_{\mu}$ \,be the sequence of coefficient
functionals corresponding to the basic sequence \,$(z^\mu)_{\mu }$ \,in \,$L^2 ( \T )$.
Since \,$\|e_\mu^*\|_2 = 1$ $(\mu \in \N )$, one obtains
\begin{equation} \label{eq perturbation}
\sum_{\mu =1}^\infty \|e_\mu^*\|_2 \cdot \|f_\mu - z^\mu\|_2 <
\sum_{\mu =1}^\infty {1 \over 3^\mu} = {1 \over 2} < 1.
\end{equation}
From (\ref{eq perturbation}) and the basis perturbation
theorem \cite[p.~50]{diestel1984} it follows that
$(f_\mu)_{\mu }$ \,is also a basic sequence in \,$L^2(\T )$ \,that
is equivalent to \,$(z^\mu)_{\mu}$. In particular, the functions
\,$f_\mu$ ($\mu \in \N$) \,are linearly independent
and \,$M$ \,is an infinite dimensional closed vector space.

\vskip 4pt

Now, fix any \,$f \in M \setminus\{0\}$ \,and let
\,$f = \sum_{\mu\in\N} \alpha_\mu f_\mu$ \,be its representation in \,$L^2(\T)$. As \,$f \ne 0$ \,there is some nonzero coefficient \,$\alpha_\mu$,
which without loss of generality may supposed to be $\alpha_1$
(indeed, if $\alpha_m$, and not $\alpha_1$, is the first nonzero coefficient, the reasonings below would involve sums
$\sum_{\mu = m}^{N_j}$, $\sum_{\mu = m+1}^{N_j}$, $\sum_{\mu = m}^{\infty}$ and the inequality $|f_m(\varphi_n(z))-P_l(z)| < \ve (\varphi_n(z))$, which produce
the same effects as if $m=1$).
Furthermore, 
by the invariance under scalar multiplication of frequent hypercyclicity,
we can assume that \,$\alpha_1 = 1$.
Thanks to the definition of \,$M$, there is a sequence \,$\Big( h_j := \sum_{\mu=1}^{N_j} \alpha_{j,\mu} f_\mu \Big)_{j }$
\,converging to \,$f$ \,in \,$H(G)$. Hence
$$h_j \longrightarrow f \hbox{ \ in \ }  L^2(\T ),$$
because the topology in \,$H(G)$ \,is finer than the one in \,$L^2(\T )$.
By the continuity of each projection, we obtain that \,$\alpha_{j,1}\to 1$  \,($j \to\infty$), and without loss
of generality we may assume that \,$\alpha_{j,1} = 1$ \,for all \,$j \in \N$ \,(if not, just take $H_j := h_j+(1-\alpha_{1,j})f_1$).
Finally, by Lemma \ref{Lemma: basic sequences}, there is a constant \,$H>0$ \,such that
\begin{equation} \label{Inequality}
\sum_{\mu=1}^{N_j}|\alpha_{j,\mu}|^2\le H \hbox{ \ for all \ } j \in \N .
\end{equation}

Next, fix \,$l, \nu \in\N$ \,with \,$\nu\ge k_1$ \,and \,$n\in A(\nu,l,1)$.
On the one hand, the set \,$\varphi_n (K_\nu)$ \,is compact. Then,
given \,$\delta > 0$, there exists \,$j = j(n) \in \N$ \,such that
\begin{equation}\label{sp1}
|f(\varphi_n(z)) - h_j(\varphi_n(z))|< {\delta \over 2} \hbox{ \ \,for all \ } z \in K_\nu .
\end{equation}
But on the other hand, applying \eqref{spaceable} and \eqref{Inequality} together with the Cauchy--Schwarz inequality guarantees that, for all $z \in K_\nu$,
\begin{eqnarray} \label{sp2}
|h_j(\varphi_n(z))-P_l(z)|&\le&\dis |f_1(\varphi_n(z))-P_l(z)|+\sum_{\mu=2}^{N_j}|\alpha_{j,\mu}|\, |f_\mu(\varphi_n(z))|\nonumber\\
&\dis \le&\dis \varepsilon(\varphi_n(z))+\left(\sum_{\mu=2}^{N_j}|\alpha_{j,\mu}|^2\right)^{1/2} \left(\sum_{\mu=2}^{N_j}|f_\mu(\varphi_n(z))|^2\right)^{1/2}\nonumber\\
&\dis < &\dis \varepsilon(\varphi_n(z)) + \sqrt{H} \, \sum_{\mu=2}^\infty\frac{1}{3^\mu} \cdot \varepsilon(\varphi_n(z))\nonumber\\
&\dis < &\dis (1+\sqrt{H}) \cdot \varepsilon(\varphi_n(z)).
\end{eqnarray}
Given \,$\nu \in \N$, there is \,$n_0 = n_0(\nu ) \in \N$ \,such that \,$\sup_{z \in K_\nu} \ve (\varphi_n (z)) < \delta / (2 + 2\sqrt{H})$ \,for \,$n \ge n_0$.
Finally, from \eqref{sp1}, \eqref{sp2} and the triangle inequality (which, incidentally, neutralizes the effect of the dependence $j=j(n)$), an argument similar to that of the end of the proof of
Theorem \ref{existence} yields frequent $(C_{\varphi_n})_n$-hypercyclicity for \,$f$. This concludes the proof.
\end{proof}

A slight modification of the arguments in the proof of the last result allows us
to prove the maximal dense lineability of \,$FHC((C_{\varphi_n})_n)$ as well.

\begin{theorem}\label{dense-lineable-thm}
Let \,$G \subset \C$ \,be a simply connected domain and \,$(\varphi_n)_n$ be a strongly frequently runaway sequence in $H_{1-1}(G)$. Then the set \,$FHC((C_{\varphi_n})_n)$ is maximal dense-lineable.
\end{theorem}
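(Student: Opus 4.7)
The plan is to refine the construction in the proof of Theorem \ref{spaceable_thm} so as to produce a vector subspace of $H(G)$ that is not only of dimension $\dim H(G)=\mathfrak{c}$ but also dense, while having every nonzero member be frequently hypercyclic. I would retain the entire setup of Theorem \ref{spaceable_thm} (exhaustive $(K_\nu)\subset\mathcal{M}(G)$ with $\overline{\D}\subset K_1$, dense polynomial sequence $(P_l)_l$, decomposition of $A(\nu)$ via Lemma \ref{Lemma splitting}), but now introduce \emph{two} parallel decompositions. First, split each $A(\nu)$ into two disjoint subsets $A^{(1)}(\nu)\sqcup A^{(2)}(\nu)$ of positive lower density, and then further split $A^{(1)}(\nu)=\bigsqcup_{l,k}A^{(1)}(\nu,l,k)$ and $A^{(2)}(\nu)=\bigsqcup_{l,\mu}A^{(2)}(\nu,l,\mu)$ into pairwise disjoint positive-lower-density pieces.

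On the $A^{(1)}$-slots I would run the proof of Theorem \ref{spaceable_thm} verbatim to build a sequence $(\tilde f_k)_k$ and the closed subspace $M := \overline{\mathrm{span}}\{\tilde f_k\}$ of dimension $\mathfrak{c}$ in $FHC((C_{\varphi_n})_n)\cup\{0\}$, adding the requirement that the target $\tilde g_k$ vanishes on every $\varphi_n(K_\nu)$ with $n\in A^{(2)}(\nu)$. On the $A^{(2)}$-slots I would build a dense countable family $(f_\mu)_\mu$ as follows. Fix an enumeration $(h_\mu)_\mu$ of a countable dense subset of polynomials in $H(G)$ (dense by Runge's theorem, since $G$ is simply connected), each polynomial repeated infinitely often, and pick $\sigma(\mu)\to\infty$. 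For each $\mu$, use (P3) of Definition \ref{Def: strongly freq run} to pick $k_\mu$ large enough that $K_{\sigma(\mu)}\cap\varphi_n(K_\nu)=\varnothing$ for all $\nu\ge k_\mu,\ n\in A(\nu)$, and form its own Carleman set
$$
F_\mu := K_{\sigma(\mu)}\cup\bigcup_{\nu\ge k_\mu}\bigcup_{n\in A(\nu)}\varphi_n(K_\nu),
$$
set $g_\mu = h_\mu$ on $K_{\sigma(\mu)}$, $g_\mu(z)=P_l(\varphi_n^{-1}(z))$ on $\varphi_n(K_\nu)$ for $n\in A^{(2)}(\nu,l,\mu)$, $\nu\ge k_\mu$, and $g_\mu=0$ on every other $\varphi_n(K_\nu)$-piece of $F_\mu$ (in particular on all $A^{(1)}$-slots and all $A^{(2)}(\nu,l,\mu')$-slots with $\mu'\ne\mu$). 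Nersesjan's theorem (Theorem \ref{Nersesjan}) then provides $f_\mu\in H(G)$ with $|f_\mu-g_\mu|<3^{-\mu}\min\{1,\varepsilon(z)\}$ on $F_\mu$.

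Two verifications close the argument. Density of $N:=\mathrm{span}\{f_\mu\}$ in $H(G)$: given $h\in H(G)$, a compact $K$ and $\delta>0$, the conditions $\sigma(\mu)\to\infty$ and density of $(h_\mu)$ supply infinitely many $\mu$ with $K\subseteq K_{\sigma(\mu)}$ and $\|h_\mu-h\|_{K_{\sigma(\mu)}}<\delta/2$, whence $\|f_\mu-h\|_K<\delta$. Frequent hypercyclicity of every nonzero element of $N+M$: write such an element as $f=\tilde f+\tilde N$ with $\tilde f\in M$ and $\tilde N\in N$. If $\tilde f\ne 0$, I would repeat the Cauchy--Schwarz/basic-sequence reasoning of Theorem \ref{spaceable_thm} on the $A^{(1)}$-slots, where $\tilde N\circ\varphi_n\approx 0$ by construction, to locate a dominant coefficient and approximate arbitrary polynomials on arbitrary $K_\nu$. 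If $\tilde f=0$ and $\tilde N\ne 0$, write $\tilde N=\sum_{\mu\in F_{\mathrm{fin}}}c_\mu f_\mu$ with some $c_{\mu_0}\ne 0$, and observe that for $n\in A^{(2)}(\nu,l,\mu_0)$ with $\nu\ge\max_{\mu\in F_{\mathrm{fin}}}k_\mu$ one has $C_{\varphi_n}(f_{\mu_0})\approx P_l$ and $C_{\varphi_n}(f_\mu)\approx 0$ on $K_\nu$ for the other $\mu$'s, so that $C_{\varphi_n}(\tilde N)\approx c_{\mu_0}P_l$ uniformly on $K_\nu$; positive lower density of $A^{(2)}(\nu,l,\mu_0)$ together with density of $\{c_{\mu_0}P_l:l\in\N\}$ in $H(G)$ then yields $\tilde N\in FHC((C_{\varphi_n})_n)$. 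Hence $N+M$ is a dense vector subspace of $H(G)$ of dimension at least $\dim M=\mathfrak{c}$, entirely contained in $FHC((C_{\varphi_n})_n)\cup\{0\}$.

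The main obstacle I anticipate is the simultaneous bookkeeping that makes the two constructions interact cleanly: the target-definitions must be coupled so that on each group of $n$'s used by one construction, the functions built by the other construction nearly vanish on every $K_\nu$. This requires both decompositions of $A(\nu)$ to remain disjoint through all the sub-splittings, and Nersesjan's error function $\varepsilon$ to be shrunk enough that both the dense-component and the spaceable-component estimates work inside a single finite combination. The cross-interference term $\tilde N\circ\varphi_n$ on the $A^{(1)}$-slots (and symmetrically $\tilde f\circ\varphi_n$ on the $A^{(2)}$-slots) must be controllably small for every finite linear combination, which is arranged by the geometric error decay $3^{-\mu}$ together with the Cauchy--Schwarz bound supplied by Lemma \ref{Lemma: basic sequences}, exactly as in Theorem \ref{spaceable_thm}.
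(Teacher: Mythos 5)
Your proposal is correct and follows essentially the same route as the paper's proof: the paper likewise reserves one group of slots (the sets $A(\nu,l,1)$, further subdivided) for a spaceable family $M_s$ of dimension $\mathfrak{c}$ and the complementary slots $A(\nu,l,p)$, $p\ge 2$, for a countable dense family $M_d$ obtained by targeting $P_\mu$ on $K_{\mu+1}$ via Nersesjan, arranges that each family's functions are $O(\varepsilon(\varphi_n(z)))$ on the other family's active slots, and concludes for $M_d+M_s$ by the same case split and triangle inequality. The only differences are cosmetic (labelling of the splitting, error $1/\mu$ versus $3^{-\mu}$ for the dense family, and choice of which nonzero coefficient to isolate).
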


\begin{proof}
Again, let $(K_\nu)_{\nu }$ and $\{A(\nu ):\,{\nu \in\N}\}$ be an exhaustive sequence of compact sets and a countable family of subsets of $\N$ given by the strongly frequent runawayness of $(\varphi_n)_n$. Without loss of generality we may assume that \,$\overline{\D}\subset K_1$.

\vskip 4pt

Let $(P_l)_l$ be a dense sequence in \,$H(G)$ \,and divide each set \,$A(\nu)$ \,into infinitely many mutually
disjoint subsets \,$A(\nu,l,p)$ $(l,p \in \N )$, each of them with positive lower density. For \,$\mu\in\N$ fixed, by (P3) there
exists \,$k_\mu \in \N$ \,such that for all \,$\nu \ge k_\mu$ \,and all \,$n\in A(\nu)$, we have $K_\mu \cap \varphi_n(K_\nu)=\varnothing$.
Note that the sequence \,$(k_\mu)_\mu \subset \N$ \,can be chosen to be strictly increasing.
Similarly to the proofs of Theorems \ref{existence} and \ref{spaceable_thm},
one derives from (P2) and (P3) that every set
$$
F_\mu := K_\mu \cup \bigcup_{\nu\ge k_\mu} \bigcup_{n\in A(\nu)} \varphi_n (K_\nu)
$$
is a Carleman set.


\vskip 4pt

For each \,$\mu\in\N$, let us define the function \,$g_\mu : F_{\mu+1} \to \C$ as follows:
$$
g_\mu(z):=\begin{cases}
P_{\mu}(z)&\text{ if }z\in K_{\mu+1}\\
P_l(\varphi_n^{-1}(z))&\text{ if }z\in \varphi_n(K_\nu),\,\nu\ge k_{\mu+1},\, n\in A(\nu,l,\mu+1),\, l\in\N\\
0&\text{ if }z\in \varphi_n(K_\nu),\,\nu\ge k_{\mu+1},\, n\in A(\nu,l,p),\,p\ne\mu+1,\,l\in\N.
\end{cases}
$$
Observe that $g_\mu(z)=0$ for all $z\in\varphi_n(K_\nu)$ where $n\in A(\nu,l,1)$, $\nu\ge k_{\mu+1}$ and $l\in\N$.
We are going to use these compact sets later.

\vskip 4pt

As in the proof of Theorem \ref{spaceable_thm}, each \,$g_\mu$ \,is continuous on \,$F_{\mu + 1}$ \,and holomorphic on $F_{\mu + 1}^\circ$.
Hence, for each \,$\mu \in \N$, Theorem \ref{Nersesjan} guarantees the existence of a function \,$f_\mu \in H(G)$ \,such that \begin{equation}\label{dense-lineable}
|f_\mu(z) - g_\mu(z)| < \frac{1}{\mu} \cdot \min \{1, \varepsilon(z) \} \text{ \ for all \ } z \in F_{\mu +1},
\end{equation}
where $\varepsilon(z)=\chi (z,\partial_\infty G)$. From \eqref{dense-lineable}, we get, in particular, that \,$|f_\mu(z)-P_\mu(z)| < \frac{1}{\mu}$ \,for all \,$z\in K_{\mu+1}$. So
$$
\lim_{\mu\to\infty} \|f_\mu-P_\mu\|_{K_{\mu+1}} = 0
$$
and, since the sequence \,$(P_\mu)_\mu$ \,is dense in \,$H(G)$ \,and \,$(K_\mu)_\mu$ \,is an exhaustive sequence of compact sets,
we derive that \,$(f_\mu)_\mu$ \,is dense in \,$H(G)$.
Now, we define the set
$$
M_d := {\rm span} \{f_\mu:\, \mu \in \N\} ,
$$
the linear span of the latter sequence. It is plain that \,$M_d$ \,is a dense vector subspace of \,$H(G)$.
We claim that \,$M_d \setminus\{0\} \subset FHC((C_{\varphi_n})_n)$.

\vskip 4pt

To this end, fix \,$H = \sum_{\mu=1}^N\lambda_\mu f_\mu \in M_d \setminus \{0\}$.
Without loss of generality, we may assume that \,$\lambda_N=1$ \,(because of the stability of frequent hypercyclicity under scaling).
Fix \,$l,\nu \in \N$ \,with \,$\nu \ge k_{N+1}$, $n\in A(\nu,l,N+1)$ \,and \,$z\in K_\nu$, so that \,$\varphi_n(z) \in \varphi_n(K_\nu)$.
It follows from \eqref{dense-lineable} that
\begin{eqnarray*}
|H(\varphi_n(z))-P_l(z)|&\le &\dis |f_N(\varphi_n(z))-P_l(z)|+\sum_{\mu=1}^{N-1}|\lambda_{\mu}|\, |f_\mu(\varphi_n(z))|\nonumber\\
&\dis < &\dis \alpha \cdot \varepsilon(\varphi_n(z)), 
\end{eqnarray*}
where $\alpha :=  1 + \sum_{\mu=1}^{N-1}|\lambda_\mu|$.
Continuing as in the final part of the proof of Theorem \ref{existence}
-- but in a somewhat easier way -- we get the frequent hypercyclicity of the function \,$H$, so proving the claim.

\vskip 4pt

Observe that, from \eqref{dense-lineable} and the definition of $g_\mu$, we also obtain the next ine\-qua\-li\-ty, that will be used later:
\begin{equation}\label{a bound for H(phin)}
|H(\varphi_n (z))| < \al \cdot \varepsilon (\varphi_n (z)) \text{ \ for all \ } z \in K_\nu, \, n \in A(\nu ,l,1), \, \nu \ge k_{l+1}, \, l \in \N .
\end{equation}

Now, we go back to the sets \,$A(\nu,l,1)$. For fixed \,$\nu,l\in\N$, let us divide \,$A(\nu,l,1)$ \,into infinitely many sequences
\,${A(\nu,l,1,\mu)}$ ($\mu\in\N$), each of them with positive lower density.
Let \,$F := F_1 = K_1 \, \cup \, \bigcup_{\nu \ge k_1} \bigcup_{n \in A(\nu)} \varphi_n (K_\nu)$
\,and proceed as in the proof of Theorem \ref{spaceable_thm}, but defining \,$h_\mu : F \to \C$ $(\mu \in \N )$ \,in the next way:
$$
h_\mu (z) :=
\begin{cases}z^\mu& \text{ if }z\in K_1\\
P_l(\varphi_n^{-1}(z))&\text{ if }z\in\varphi_n(K_\nu),\, \nu\ge k_1,\, n\in A(\nu,l,1,\mu ),\, l \in \N\\
0&\text{ elsewhere.}
\end{cases}
$$
Observe that, for \,$p \ge 2$ \,and \,$l \in \N$, we have \,$h_\mu(z)=0$ \,if \,$z\in\varphi_n(K_\nu)$ \,with $\nu \ge k_1$ \,and \,$n \in A(\nu,l,p)$.

\vskip 4pt

Next, by applying Theorem \ref{Nersesjan} as in the proof of
Theorem \ref{spaceable_thm}, we get functions \,$\Phi_\mu \in H(G)$ $(\mu \in \N )$ \,satisfying
$$
|\Phi_\mu (z) - h_\mu (z)| < \frac{1}{3^\mu} \cdot \min\{1,\ve(z)\} \, \text{ for all } \, z \in F.
$$
Proceeding as in the last mentioned theorem -- except for the fact that only the sequences $A(\nu,l,1,\mu)$ $(\mu \in \N )$ are handled instead
of all sequences $A(\nu , l, \mu )$ -- it is obtained that the functions \,$\Phi_\mu$ $(\mu \in \N )$ \,are
linearly independent and that \,$M_s \setminus \ \{0\} \subset FHC((C_{\varphi_n})_n)$, where
\,$M_s := \overline{{\rm span}}\{\Phi_\mu : \,\mu \in \N \}$. Since \,$M_s$ \,is a closed infinite dimensional
vector subspace of the separable F-space \,$H(G)$, an application of Baire's category theorem
yields \,${\rm dim} \, (M_s) =\mathfrak{c} = {\rm dim} \, (H(G))$, where \,$\mathfrak{c}$ \,denotes the cardinality of continuum.
Specifically, given \,$\Phi \in M_s \setminus \{0\}$, $l,\nu \in \N$ \,with \,$\nu$ \,large enough and \,$\delta > 0$, there is
some \,$\mu_0 \in \N$ \,such that
\begin{equation} \label{eqn Ms}
|\Phi (\varphi_n(z)) - P_l(z)| < \delta /2
\end{equation}
for all \,$n \in A(\nu ,l,1,\mu_0)$ \,large enough and all \,$z \in K_\nu$.

\vskip 4pt

Finally, consider the vector subspace of \,$H(G)$ \,given by
$$
M_{\rm max} := M_d + M_s.
$$
Note that \,$M_{\rm max}$ \,is dense
(because it contains $M_d$) and has dimension $\mathfrak{c} = {\rm dim} \, (H(G))$ (because it contains $M_s$, and ${\rm dim} (M_s) = \mathfrak{c}$).
Fix a function \,$f \in M_{\rm max} \setminus \{0\}$. Two cases are possible. If \,$f \in M_d$, then \,$f \in FHC((C_{\varphi_n})_n)$.
If \,$f \not\in M_d$, then there is a function \,$\Phi \in M_s \setminus \{0\}$ (as in the preceding paragraph) and
another function \,$H \in M_d$
\,such that \,$f = \Phi + H$. Therefore \,$(C_{\varphi_n} f)(z)  = \Phi (\varphi_n(z)) + H (\varphi_n (z))$.
Given \,$l,\nu \in \N$ \,with \,$\nu$ \,large enough,
we had obtained (see \eqref{a bound for H(phin)}) the existence of \,$\al > 0$ \,such that \,$| H (\varphi_n (z)) | < \al \cdot \chi (\varphi_n (z),\partial_\infty G)$
for all \,$z \in K_\nu$ \,and all \,$n \in A(\nu ,l,1)$ (so for all \,$n \in A(\nu ,l,1,\mu_0)$).
Since \,$\sup_{z \in K_\nu} \chi (\varphi_n (z),\partial_\infty G) \to 0$ \,as \,$n \to \infty$, we get
for \,$n \in A(\nu ,l,1,\mu_0)$ \,large enough that
\,$| H (\varphi_n (z))| < \delta /2$ \,for all \,$z \in K_\nu$.
Thanks to (\ref{eqn Ms}) and the triangle inequality, we get
$|(C_{\varphi_n} f)(z) - P_l(z)| < \delta$ \,for the same \,$n,z$.
Then the proof of the fact \,$f \in FHC ((C_{\varphi_n})_n)$ \,is concluded as soon as we take into account the density of
\,$(P_l)_{l}$ \,in \,$H(G)$ \,and the property \,$\underline{\rm dens} \, ( A(\nu ,l,1,\mu_0) ) > 0$.
\end{proof}

\begin{remarks}
{\rm 1. Let \,$G \subset \C$ \,be \,{\it any} \,domain of \,$\C$ \,and \,$(\varphi_n)_n \subset H(G,G)$, and let us consider the family
$FHC_{\mathcal M} ((C_{\varphi_n})_n) := \{f \in H(G) : \, \underline{\rm dens} (\{n \in \N : \, f \circ \varphi_n \in B_K(g,\delta)\}) > 0$
for each $K \in \mathcal M (G)$, each $\delta > 0$ and each $g \in H(G)\}$.
Under the same assumptions as in Theorems \ref{existence}-\ref{spaceable_thm}-\ref{dense-lineable-thm} (the strongly frequent runawayness of $(\varphi_n)_n$ in $H_{1-1}(G)$), and with the same proofs, it is obtained
that the set \,$FHC_{\mathcal M} ((C_{\varphi_n})_n)$ is spaceable and maximal dense-lineable in \,$H(G)$.
Note that, if \,$G$ \,is multiply connected, then $FHC_{\mathcal M} ((C_{\varphi_n})_n)$ is bigger than $FHC ((C_{\varphi_n})_n)$.
In fact, if \,$G$ \,is a finitely connected domain that is not simply connected
and \,$(\varphi_n )_n \subset H_{1-1}(G)$, then the set \,$HC ((C_{\varphi_n})_n)$ (so the set \,$FHC ((C_{\varphi_n})_n)$ \,as well) is empty:
see \cite[Theorem 3.15]{grossemortini2009}.

\vskip 2pt

\noindent 2. According to B\`es \cite{bes2013} (see Section 1), if $G$ is simply connected and $\varphi \in H(G,G)$, then $C_\varphi$ is hypercyclic if and only if
it is frequently hypercyclic, and if and only if $FHC(C_\varphi )$ is spaceable. Since we do not know whether the property of being strongly frequently runaway for $(\varphi_n)$  characterizes the frequent hypercyclicity of $(C_{\varphi_n})$, a natural \,{\it question} \,arises for sequences of composition operators: If $\{\varphi_n\}_{n \ge 1} \subset H_{1-1}(G)$ is such that $(C_{\varphi_n})$ is frequently hypercyclic, is $FHC((C_{\varphi_n}))$ spaceable (and even maximal dense-lineable)?}
\end{remarks}

\section{Examples and final remarks}

1. The first example is motivated by one in \cite{bernal2013}.
Consider the slit complex plane \,$G := \C \setminus (-\infty ,0]$.
Let $\alpha , \beta \in \R$ with $\beta > 0$ and $\beta \ge 1 +\al$.
For \,$N \in \N$, $z^{1/N}$ \,will represent the principal branch of the
$N$th root of \,$z$ \,in \,$G$. Fix $N \in \N$. Then the mappings
$$\varphi_n (z) := n^\al z^{1/N} + n^\beta \ \ (n \in \N )$$
belong to \,$H_{1-1}(G) \setminus {\rm Aut} \, (G)$.
Let \,$C > 0$ \,be a constant to be specified later, and consider the
numbers $R_\nu = (C \, \nu^{\beta - \al})^N$ and the compact sets
$$
K_\nu = \Big\{z = re^{i \theta} : \, \min \big\{{1 \over R_\nu} , 1 \big\} \le r \le R_\nu , \, |\theta | \le \pi \big(1 - {1 \over \nu} \big) \Big\}
\ (\nu \in \N ).
$$
Note that $K_\nu \subset \ovl{D}(0,R_\nu )$, the closed disc with center \,$0$ \,and radius \,$R_\nu$.
Hence \,$\varphi_n (K_\nu ) \subset \ovl{D}(n^\beta , n^\al R_\nu^{1/N} )$.
Plainly, all $K_\nu$'s lack holes and form an exhaustive sequence of compact subsets of $\C \setminus (-\infty ,0]$.
By \cite[Lemma 2.2]{bonillagrosse2007}, there exist
pairwise disjoint subsets \,$A(l,\nu )$ $(l,\nu \ge 1)$ \,of \,$\N$ with \,$\underline{\rm dens} (A(l,\nu )) > 0$
\,such that, for any \,$n \in A(l,\nu )$ \,and \,$m \in A(k,\mu )$, we have \,$n \ge \nu$ \,and \,$|n-m| \ge \nu + \mu$ \,if \,$n \ne m$.
Define $A(\nu ) := \bigcup_{l \in \N} A(l,\nu )$. Of course, $\underline{\rm dens} (A (\nu )) > 0$ for every $\nu \in \N$, and the family
\,$\{A(\nu ): \, \nu \in \N\}$ \,is pairwise disjoint.
Pick two distinct members \,$A,B \in \mathcal K := \{\varphi_n (K_\nu ) : \, n \in A(\nu ), \, \nu \in \N\}$.
Then there are distinct $m,n \in \N$ ($m > n$, say) as well as sets $A(l,\nu ), \, A(k,\mu )$ such that
\,$n \in A(l,\nu ), \,m \in A(k,\mu ), \, A = \varphi_m (K_\mu )$ \,and \,$B = \varphi_n (K_\nu )$.
It is easy to see that \,$\sigma := \inf_{t > 1} {t^\beta - 1 \over t^\al (t-1)^{\beta - \al}} > 0$. Take \,$C := \min \{1/2,\sigma /4 \}$.
Since $m > n$, we get \,${(m/n)^\beta - 1 \over (m/n)^\al ((m/n)-1)^{\beta - \al}} \ge 4 \, C$, that is,
${m^\beta - n^\beta \ge  4 \, C \cdot m^\al (m - n)^{\beta - \al}}$. The distance between the centers of
the discs \,$D_1 := \ovl{D}(m^\beta , m^\al R_\mu^{1/N} )$ \,and \,$D_2 := \ovl{D}(n^\beta , n^\al R_\nu^{1/N} )$ \,is \,$m^\beta - n^\beta$.
But observe that
\begin{equation*}
\begin{split}
m^\beta - n^\beta &>  4 \, C \cdot {m^\al + n^\al \over 2} \cdot (m - n)^{\beta - \al}
                                                         \ge  4 \, C \cdot {m^\al + n^\al \over 2} \cdot (\mu + \nu )^{\beta - \al} \\
                  &\ge 2 \, C \cdot {m^\al + n^\al \over 2} \cdot (\mu^{\beta - \al} + \nu^{\beta - \al})
                                                         \ge m^\al C \, \mu^{\beta - \al} + n^\al C \, \nu^{\beta - \al} \\
                  &= {\rm radius} \, (D_1) + {\rm radius} \, (D_2).
\end{split}
\end{equation*}
Consequently, $D_1 \cap D_2 = \varnothing$ \,and so \,$A \cap B = \varnothing$. Then \,$\mathcal K$ \,is pairwise disjoint. Finally, fix
a compact set \,$K \subset \C \setminus (-\infty ,0]$. There is \,$R > 0$ \,such that \,$K \subset \ovl{D}(0,R)$. If $L \in \mathcal K$, there exists
$(l,\nu ) \in \N^2$ and $n \in A(l,\nu )$ (so $n \ge \nu$) with $L = \varphi_n (K_\nu ) \subset \ovl{D}(n^\beta , n^\al R_\nu^{1/N} )$. It follows that each
$w \in L$ satisfies $|w - n^\beta| \le n^\al R_\nu^{1/N}$, hence (recall that $C \le 1/2$) we get
\,$|w| \ge n^\beta - n^\al R_\nu^{1/N} = n^\beta - n^\al C \, \nu^{\beta - \al}
\ge n^\beta - n^\al C \, n^{\beta - \al} \ge (1/2) n^\beta > R$ \,if \,$n$ \,is large enough. Therefore $K \cap \varphi_n (K_\nu ) = \varnothing$, so
\,$K \cap L = \varnothing$ \,except for finitely many \,$L \in \mathcal K$, because no \,$n$ \,may belong to two different $A(l,\nu )$'s.
According with Theorems \ref{existence}-\ref{spaceable_thm}-\ref{dense-lineable-thm}, $(C_{\varphi_n})_n$ is
frequently hypercyclic and the set $FHC((C_{\varphi_n})_n)$ \,is spaceable and
maximal dense-lineable in \,$H(\C \setminus (-\infty ,0])$.

\vskip 5pt

2. If \,$G$ \,and \,$\Omega$ \,are simply connected domains different from \,$\C$, then the Riemann conformal representation theorem (see, e.g., \cite{ahlfors1979}) provides an isomorphism (that is, a bijective biholomorphic mapping) \,$f : G \to \Omega$.
Therefore, if \,$\varphi \in H(G,G)$, we have \,$f \circ \varphi \circ f^{-1} \in H(\Omega , \Omega )$ \,and the
mapping \,$h \in H(\Omega ) \mapsto h \circ f \in H(G)$ \,is a linear homeomorphism.
Hence eve\-ry example of a sequence \,$(\varphi_n )_n \subset H(G,G)$ \,satisfying that \,$FHC ((C_{\varphi_n})_n)$ \,is nonempty
(spaceable, maximal dense-lineable, resp.) provides us with an example of a sequence \,$(\Phi_n)_n \subset H(\Omega,\Omega)$ \,such that
\,$FHC ((C_{\Phi_n})_n)$ \,is nonempty (spaceable, maximal dense-lineable, resp.). Indeed, take \,$\Phi_n := f \circ \varphi_n \circ f^{-1}$ $(n \in \N )$.
For instance, we have in accordance with Example 1 that \,$FHC((C_{\varphi_n})_n)$ \,is spaceable and maximal
dense-lineable in \,$H(\C \setminus (-\infty ,0])$, where \,$\varphi_n (z) = n + z^{1/N}$. Moreover, the mapping
\,$f : z \in \C \setminus (-\infty ,0] \mapsto {z^{1/2} - 1 \over z^{1/2} + 1} \in \D$ \,is an isomorphism
with inverse \,$f^{-1}(z) = \left( {1+z \over 1-z} \right)^2$. Consequently, the set
$FHC((C_{\Phi_n})_n)$ is spaceable and maximal dense-lineable in $H(\D )$, where
\,$\Phi_n (z) = {\big(n + (({1+z \over 1-z})^2)^{1/N} \big)^{1/2} - 1 \over \big(n + (({1+z \over 1-z})^2)^{1/N} \big)^{1/2} + 1}$.

\vskip 5pt

3. Let $a > 0$ and $\gamma \ge 1$. It is easy to check that every linear fractional function
\,$\Phi_n(z) := 1 + {2(z-1) \over 2 - ian^\gamma (z-1)}$ $(n \in \N )$ \,is an automorphism of $\D$.
Moreover, every $\Phi_n$ is parabolic (see \cite[pp.~6--7]{shapiro1993}), that is, it has a unique fixed point, located at $\T$ (namely, at $1$).
With arguments similar -- but easier -- to those given in Example 1, we can check that if $\gamma \ge 1$,
$\Pi_+ := \{z \in \C :$ Re$\, z > 0\}$ is the open right half-plane and $\varphi_n (z) := z + i a n^\gamma$ \,for each $n \in \N$
(note that $(\varphi_n )_n \subset {\rm Aut} \, (\Pi_+ )$), then $FHC((C_{\varphi_n})_n)$ is spaceable and maximal dense-lineable in $H(\Pi_+ )$.
Since \,$f(z) := {1 + z \over 1 - z}$ \,is an isomorphism between \,$\D$ \,and \,$\Pi_+$ \,with inverse \,${z - 1 \over z + 1}$, and
since \,$\Phi_n = f^{-1} \circ \varphi_n \circ f$, we obtain as in Example 2 the spaceability as well as the maximal dense-lineability for
\,$FHC((C_{\Phi_n})_n)$ \,in \,$H(\D )$. The case \,$\gamma = 1$ \,gives the same properties (spaceability was already known, see Section 1) for
the set \,$FHC(C_\varphi )$, where \,$\varphi$ \,is the
parabolic automorphism of \,$\D$ \,given by \,$\varphi (z) = 1 + {2(z-1) \over 2 - i a (z-1)}$.

\vskip 5pt

4. Let \,$G = \C$. In this case \,$H_{1-1}(G) = {\rm Aut} \, (\C ) = \{$similarities $z \mapsto az+b : \, a,b \in \C , \, a \ne 0\}$.
Examples of sequences of similarities $(\varphi_n (z) = a_n z + b_n)_{n }$ \,for which $(C_{\varphi_n})_n$ is frequently hypercyclic were
provided in \cite[Theorem 4.1]{bernal2013} (see Section 1). In its proof, it is shown in fact that these sequences $(\varphi_n)$ are strongly
frequently runaway. Then, according to Theorems \ref{existence}-\ref{spaceable_thm}-\ref{dense-lineable-thm}, we obtain that for each of such
sequences the set $FHC((C_{\varphi_n})_n)$ is not only nonempty but also spaceable and maximal dense-lineable in $H(\C )$.

\vskip 5pt

5. In this final and rather general example --from which the parabolic automorphism \,$\varphi$ \,defined in the last line of Example 3 above is a particular instance-- we will not use Theorems \ref{existence}-\ref{spaceable_thm}-\ref{dense-lineable-thm}.
We say that a domain \,$G \subset \C$ \,is a \textit{Jordan domain} whenever $\partial_\infty G$ \,is a homeomorphic
image of \,$\T$ (hence $G$ need not be bounded; for instance, an open half-plane is a Jordan domain).
We exhibit the mentioned collection of examples in the following proposition.

\begin{proposition}
The following assertions hold:
\begin{itemize}
\item[(a)] Assume that \,$G$ \,is a Jordan domain in \,$\C$ \,and that \,$\varphi \in H_{1-1}(G)$ \,satisfies that
there is \,$\xi \in \partial_\infty G$ \,such that
\,$\varphi^{n} \to \xi$ \,as \,$n \to \infty$ \,uniformly on compacta in \,$G$.
Then \,$FHC(C_\varphi )$ \,is maximal dense-lineable in \,$H(G)$.
\item[(b)] If \,$\varphi$ is a non-elliptic automorphism of \,$\D$, then \,$FHC(C_\varphi )$ is maximal dense-lineable in $H(\D )$.
\end{itemize}
\end{proposition}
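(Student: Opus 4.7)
The plan is to deduce both assertions from Theorem \ref{dense-lineable-thm} by verifying the strongly frequent runaway property. Part (b) is an immediate consequence of (a): for any non-elliptic automorphism $\varphi$ of $\D$, the Denjoy--Wolff theorem (see, e.g.,~\cite{shapiro1993}) produces a point $\xi \in \T$ with $\varphi^n \to \xi$ uniformly on compacta of $\D$, and since $\D$ is a Jordan domain and $\mathrm{Aut}(\D) \subset H_{1-1}(\D)$, assertion (a) delivers the conclusion at once (noting that $(C_\varphi)^n = C_{\varphi^n}$).

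For (a), first observe that $G$ is simply connected: the Jordan curve $\partial_\infty G$ separates the sphere $\C_\infty$ into two components, one of them $G$, so $\C_\infty \setminus G$ is the closure of the other component, and hence connected. Each iterate $\varphi^n$ is a univalent self-map of $G$ by composition. Fix any exhaustive sequence $(K_\nu)_\nu \subset \mathcal M (G)$; since $\xi \notin G$ and $\varphi^k \to \xi$ uniformly on $K_\nu$, for every pair $(\nu,\mu)$ there exists $D(\nu,\mu) \in \N$ such that $\varphi^k(K_\nu) \cap K_\mu = \varnothing$ for all $k \ge D(\nu,\mu)$. Injectivity of the $\varphi^n$ shows that, for $n > m$, the relation $\varphi^n(K_\nu) \cap \varphi^m(K_\mu) = \varnothing$ is equivalent to $\varphi^{n-m}(K_\nu) \cap K_\mu = \varnothing$, and therefore holds whenever $|n-m| \ge \max\{D(\nu,\mu),D(\mu,\nu)\}$.

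Next, I would pick a strictly increasing function $\psi:\N \to \N$ with $\psi(\nu) \ge \max\{D(\nu',\nu'') : \nu',\nu'' \le \nu\}$, and apply \cite[Lemma 2.2]{bonillagrosse2007} (exactly as in Example 1 of Section 4, but with the indices $\psi(\nu)$) to obtain pairwise disjoint sets $A(\nu) \subset \N$, each of positive lower density, satisfying $\min A(\nu) \ge \psi(\nu)$ and $|n-m| \ge \psi(\nu)+\psi(\mu)$ for any distinct pair $n \in A(\nu), m \in A(\mu)$. Then (P1) and the first half of (P2) of Definition \ref{Def: strongly freq run} hold by construction, while the inequality $\psi(\nu)+\psi(\mu) \ge \max\{D(\nu,\mu),D(\mu,\nu)\}$ delivers the image-disjointness half of (P2). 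For (P3), given a compact $K \subset G$, fix $\nu_0$ with $K \subset K_{\nu_0}$; for $\nu \ge \nu_0$ and $n \in A(\nu)$ one has $n \ge \psi(\nu) \ge D(\nu,\nu)$, whence $\varphi^n(K_\nu) \cap K_\nu = \varnothing$ and therefore $\varphi^n(K_\nu) \cap K = \varnothing$ because $K \subset K_{\nu_0} \subset K_\nu$; each of the finitely many $\nu < \nu_0$ contributes only finitely many troublesome $n$ by uniform convergence of $\varphi^n$ on $K_\nu$. Hence $(\varphi^n)$ is strongly frequently runaway in $H_{1-1}(G)$ and Theorem \ref{dense-lineable-thm} yields the claimed maximal dense-lineability.

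The main obstacle is the combinatorial tuning of the sets $A(\nu)$, which must simultaneously realize positive lower density, both disjointness requirements in (P2), and the local finiteness (P3); this is achieved by exploiting the uniform convergence $\varphi^n \to \xi$ to calibrate the growth rate $\psi$ and then sparsifying a standard Bonilla--Grosse-Erdmann arithmetic-progression construction accordingly.
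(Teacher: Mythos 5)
Your proof is correct, but it takes a genuinely different route from the paper's. The paper explicitly announces that in this example it will \emph{not} use Theorems \ref{existence}--\ref{spaceable_thm}--\ref{dense-lineable-thm}: instead it observes that $\varphi$ has no fixed point in $G$, so B\`es's result gives spaceability of $FHC(C_\varphi)$; it then uses the Jordan hypothesis via the Osgood--Carath\'eodory theorem to produce a conformal map $f:G\to\D$ with a homeomorphic boundary extension, so that $\mathcal D:=\{P\circ f:\,P \text{ polynomial}\}$ is a dense subspace of $H(G)$ on which every orbit $((C_\varphi)^{n}Q)_n$ converges (to $P(f(\xi))$); finally it invokes the general lineability criterion of \cite[Theorem 4.10(a)]{bernalordonez2014} to upgrade ``spaceable plus dense subspace with convergent orbits'' to maximal dense-lineability. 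You instead verify directly that $(\varphi^{n})_n$ is strongly frequently runaway and feed it into Theorem \ref{dense-lineable-thm}; your verification is sound (the reduction $\varphi^{n}(K_\nu)\cap\varphi^{m}(K_\mu)=\varnothing \Leftrightarrow \varphi^{n-m}(K_\nu)\cap K_\mu=\varnothing$ via injectivity, the calibration of $\psi$ against the thresholds $D(\nu,\mu)$, and the relabelled use of \cite[Lemma 2.2]{bonillagrosse2007} all check out, and the case $n=m$, $\nu\neq\mu$ in (P2) is excluded by the disjointness of the $A(\nu)$'s). Your approach buys two things: it stays entirely inside the paper's own machinery (no appeal to B\`es or to the external lineability criterion), and it only uses simple connectivity of $G$ together with the locally uniform convergence $\varphi^{n}\to\xi$, so it actually proves the statement for arbitrary simply connected domains and yields spaceability as a free by-product via Theorem \ref{spaceable_thm}; the Jordan hypothesis is needed only for the paper's route, where the boundary extension of $f$ is essential. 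What the paper's route buys is brevity and an illustration that these consequences can be reached without the runaway verification.
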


\begin{proof}
(a) As a particular instance of a result by B\`es \cite{bes2013}
mentioned in Section 1, the set \,$FHC(C_\varphi )$ \,is spaceable if it is assumed that \,$\varphi \in H_{1-1}(G)$ \,and
\,there is \,$\xi \in \partial_\infty G$ \,such that
\,$\varphi^{n} \to \xi$ \,as \,$n \to \infty$ \,uniformly on compacta in \,$G$:
indeed, such a \,$\varphi$ \,cannot have fixed points in \,$G$.
According to Osgood--Carath\'eodory's theorem (see \cite{henrici1986}), there is an isomorphism \,$f : G \to \D$ \,that is extendable
to a homeomorphism \,$\ovl{G}^\infty \to \ovl{\D}$, where \,$\ovl{G}^\infty$ \,is the closure in \,$\C_\infty$ \,of \,$G$. Since the polynomials form
a dense set in $H(\D )$, the set \,$\mathcal D := \{P \circ f : \, P$ polynomial$\}$ \,is a dense vector subspace of \,$H(G)$.
Now, the sequence $((C_\varphi)^{n} Q )_{n }$ converges in $H(G)$ for every $Q \in \mathcal D$, because if
\,$Q = P \circ f$ \,then
$$
(C_\varphi)^{n} Q= C_{\varphi^{n}} Q = P \circ f \circ \varphi^{n} \longrightarrow P(f(\xi )) \quad (n \to \infty )
$$
uniformly on compacta in $G$.
Since $FHC(C_\varphi ) = FHC((C_{\varphi^{n}} )_n)$ is spaceable and $H(G)$ is a separable F-space, from \cite[Theorem 4.10(a)]{bernalordonez2014}
we can conclude that $FHC(C_\varphi )$ is maximal dense-lineable in $H(G)$.

\vskip 2pt

\noindent (b) This is a special instance of (a) because for every non-elliptic $\varphi \in {\rm Aut} (\D )$ there is a point $\xi \in \T$ (the unique fixed point if $\varphi$ is parabolic, and the attractive fixed boundary point if $\varphi$ is hyperbolic) such that \,$\varphi^{n} \to \xi$ \,as \,$n \to \infty$ \,uniformly on compacta in \,$\D$ (see \cite{shapiro1993}).
\end{proof}


6. In the proof of Theorems \ref{existence}-\ref{spaceable_thm}-\ref{dense-lineable-thm}, we have not used any property of the lower density except that if \,$A$ \,is a subset of \,$\N$ \,with \,$\underline{\rm dens} (A) > 0$, then \,$A$ \,can be
divided into an infinity of disjoint sets \,$A_n$ \,with \,$\underline{\rm dens} (A_n) > 0$. Therefore, our results in Theorems \ref{existence}-\ref{spaceable_thm}-\ref{dense-lineable-thm} will also be verified for the upper-frequent hypercyclicity
--introduced by Shkarin in \cite{shkarin2009}-- by replacing in the definition of ``strongly frequently runaway'' (Definition \ref{Def: strongly freq run}) the condition (P1) by $\overline{\rm dens} (A(\nu)) > 0$ for all $\nu\in\mathbb{N}$. More generally, the notion of $\mathcal A$-hypercyclicity has been recently introduced in a paper of
B\`es-Menet-Peris-Puig \cite{besmenetperispuig2016}, where an operator $T : X \to X$ is said to be $\mathcal A$-hypercyclic if there exists $x \in X$ such
that, for every nonempty open set $U \subset X$, we have $N(x,U) \in \mathcal A$. Here $\mathcal A$ is a nonempty collection of nonempty subsets of \,$\N$ satisfying appropriate axioms and $N(x,U) := \{n \in \N : \, T^n x \in U\}$. The extension of these notions to a sequence \,$T_n : X \to X$ $(n \ge 1)$ \,is immediate.
Therefore, if for every \,$A \in \mathcal A$ \,there is an infinity of mutually disjoint sets $A_n \subset A$ such that $A_n \in \mathcal A$ for all $n\in\mathbb{N}$, then our Theorems
\ref{existence}-\ref{spaceable_thm}-\ref{dense-lineable-thm} imply corresponding results for $\mathcal A$-hypercyclicity by replacing (P1) by
$A(\nu ) \in \mathcal A$ for all $\nu\in\mathbb{N}$. In particular, mere hypercyclicity is equivalent to $\mathcal A$-hypercyclicity for
$\mathcal A = \{$infinite subsets of $\N \}$ and, in this case, the adapted (P1), (P2) and (P3) are equivalent to the ``classical'' runaway condition
introduced in \cite{bernalmontes1995}.

\vskip 8pt

\noindent {\bf Acknowledgements.} The first, second and fourth authors have been partially supported by Plan
Andaluz de Investigaci\'on de la Junta de Andaluc\'{\i}a FQM-127
Grant P08-FQM-03543 and by MEC Grant MTM2015-65242-C2-1-P. The third author has been supported by DFG-Forschungsstipendium JU 3067/1-1.


\begin{bibdiv}
\begin{biblist}

\bib{ahlfors1979}{book}{  
  author={Ahlfors, L.V.}, 
  title={Complex Analysis},
  edition={3},
  publisher={McGraw-Hill},
  place={London},
  date={1979},
}

\bib{aronbernalpellegrinoseoane}{book}{
	author={Aron, R.M.},
	author={Bernal-Gonz\'alez, L.},
    author={Pellegrino, D.},
    author={Seoane-Sep\'ulveda, J.B.},
	title={Lineability: The Search for Linearity in Mathematics},
	publisher={CRC Press, Taylor \& Francis Group},
    place={Boca Raton, FL},
	date={2016},
}    

\bib{bayartgrivaux2004}{article}{
  author={Bayart, F.},
  author={Grivaux, S.},
  title={Hypercyclicit\'e: le r\^{o}le du spectre ponctuel unimodulaire},
  journal={C. R. Math. Acad. Sci. Paris},
  volume={338},
  date={2004},
  pages={703--708},
}

\bib{bayartgrivaux2006}{article}{
  author={Bayart, F.},
  author={Grivaux, S.},
  title={Frequently hypercyclic operators},
  journal={Trans. Amer. Math. Soc.},
  volume={358},
  date={2006},
  pages={5083--5117},
}


\bib{bayartmatheron2009}{book}{
	author={Bayart, F.},
	author={Matheron, E.},
	title={Dynamics of Linear Operators},
	note={Cambridge Tracts in Ma\-the\-ma\-tics 179},
	publisher={Cambridge University Press},
	date={2009},
}
	

\bib{beauzamy1982}{book}{
	author={Beauzamy, B.},
	title={Introduction to Banach spaces and their geometry},
	publisher={North Holland},
    place={Amsterdam},
	date={1982},
}

\bib{bernal1999b}{article}{
  author={Bernal-Gonz{\'a}lez, L.},
  title={Densely hereditarily hypercyclic sequences and large hypercyclic manifolds},
  journal={Proc. Amer. Math. Soc.},
  volume={127},
  date={1999},
  pages={3279--3285},
}

\bib{bernal2006}{article}{
  author={Bernal-Gonz{\'a}lez, L.},
  title={Linear structure of weighted holomorphic non-extendibility},
  journal={Bull. Austral. Math. Soc.},
  volume={73},
  date={2006},
  pages={335--344},
}

\bib{bernal2013}{article}{
  author={Bernal-Gonz{\'a}lez, L.},
  title={Compositionally universal entire functions on the plane and the punctured plane},
  journal={Complex Analysis Oper. Theory},
  volume={7},
  date={2013},
  pages={577--592},
}
	



\bib{bernalmontes1995}{article}{
  author={Bernal-Gonz{\'a}lez, L.},
  author={Montes-Rodr\'{\i }guez, A.},
  title={Universal functions for composition operators},
  journal={Complex Va\-ria\-bles Theory Appl.},
  volume={27},
  date={1995},
  number={1},
  pages={47--56},
}

\bib{bernalmontes1995a}{article}{
  author={Bernal-Gonz{\'a}lez, L.},
  author={Montes-Rodr\'{\i }guez, A.},
  title={Non-finite dimensional closed vector spaces of universal functions for composition operators},
  journal={J.~Approx.~Theory},
  volume={82},
  date={1995},
  number={3},
  pages={375--391},
}

\bib{bernalordonez2014}{article}{
   author={Bernal-Gonz{\'a}lez, L.},
   author={Ord{\'o}{\~n}ez Cabrera, M.},
   title={Lineability criteria, with applications},
   journal={J. Funct. Anal.},
   volume={266},
   date={2014},
   number={6},
   pages={3997--4025},
}

\bib{bernalpellegrinoseoane2014}{article}{
   author={Bernal-Gonz{\'a}lez, L.},
   author={Pellegrino, D.},
   author={Seoane-Sep{\'u}lveda, J.B.},
   title={Linear subsets of nonlinear sets in topological vector spaces},
   journal={Bull. Amer. Math. Soc. (N.S.)},
   volume={51},
   date={2014},
   number={1},
   pages={71--130},
}

\bib{bes1999}{article}{
  author={B\`es, J.P.},
  title={Invariant manifolds of hypercyclic vectors for the real scalar case},
  journal={Proc. Amer. Math. Soc.},
  volume={127},
  date={1999},
  number={6},
  pages={1801--1804},
}

\bib{bes2013}{article}{
  author={B\`es, J.P.},
  title={Dynamics of composition operators with holomorphic symbol},
  journal={Rev. Real Acad. Cien. Ser. A Mat.},
  volume={107},
  date={2013},
  pages={437--449},
}

\bib{besmenetperispuig2016}{article}{
  author={B\`es, J.P.},
  author={Menet, Q.},
  author={Peris, A.},
  author={Puig, Y.},
  title={Recurrence properties of hypercyclic operators},
  journal={Math. Annalen},
  volume={366},
  date={2016},
  number={1-2},
  pages={545--572},
}

\bib{birkhoff1929}{article}{
  author={Birkhoff, G.D.},
  title={D\'emonstration d'un th\'eor\`eme \'el\'ementaire sur les fonctions enti\`eres},
  journal={C. R. Acad. Sci. Paris},
  volume={189},
  date={1929},
  pages={473--475},
}


\bib{bonillagrosse2007}{article}{
  author={Bonilla, A.},
  author={Grosse-Erdmann, K.G.},
  title={Frequently hypercyclic operators and vectors},
  journal={Ergod.~Th. Dynam.~Sys.},
  volume={27},
  date={2007},
  pages={383--404},
}


\bib{bonillagrosse2012}{article}{
  author={Bonilla, A.},
  author={Grosse-Erdmann, K.G.},
  title={Frequently hypercyclic subspaces},
  journal={Monats. Math.},
  volume={168},
  date={2012},
  pages={305--320},
}


\bib{bourdon1993}{article}{
  author={Bourdon, P.S.},
  title={Invariant manifolds of hypercyclic operators},
  journal={Proc. Amer. Math. Soc.},
  volume={118},
  date={1993},
  number={3},
  pages={845--847},
}	
	
\bib{diestel1984}{book}{
  author={Diestel, J.},
  title={Sequences and Series in Banach Spaces},
  publisher={Springer-Verlag},
  place={New York},
  date={1984},
}

\bib{gaier1987}{book}{
  author={Gaier, D.},
  title={Lectures on complex approximation},
  publisher={Birkh\"auser},
  place={Basel-London-Stuttgart},
  date={1987},
}



\bib{grossemortini2009}{article}{
  author={Grosse-Erdmann, K.-G.},
  author={Mortini, R.},
  title={Universal functions for composition operators with nonautomorphic symbol},
  journal={J.~Anal.~Math.},
  volume={107},
  date={2009},
  pages={355--376},
}

\bib{grosseperis2011}{book}{
  author={Grosse-Erdmann, K.-G.},
  author={Peris, A.},
  title={Linear Chaos},
  publisher={Springer},
  place={London},
  date={2011},  
}

\bib{henrici1986}{book}{
  author={Henrici, P.},
  title={Applied and computational complex analysis, {\rm Vol.~3}},
  publisher={J.~Wiley},
  place={New York},
  date={1986},
}

\bib{herrero1991}{article}{
  author={Herrero, D.},
  title={Limits of hypercyclic and supercyclic operators},
  journal={J. Funct. Anal.},
  volume={99},
  date={1991},
  number={1},
  pages={179--190},
}

\bib{menet2015}{article}{
  author={Menet, Q.},
  title={Hereditarily hypercyclic subspaces},
  journal={J. Oper. Theory},
  volume={73},
  date={2015},
  number={2},
  pages={385--405},
}

\bib{montes1996}{article}{
  author={Montes-Rodr\'{\i }guez, A.},
  title={Banach spaces of hypercyclic vectors},
  journal={Michigan Math.~J.},
  volume={43},
  date={1996},
  number={3},
  pages={419--436},
}


\bib{nersesjan1971}{article}{
  author={Nersesjan, A.A.},
  title={Carleman sets},
  language={in Russian},
  journal={Izv. Akad. Nauk Armyan. SSR Ser. Mat.},
  volume={6},
  date={1971},
  pages={465--471},
}

\bib{remmert1991}{book}{
  author={Remmert, R.},
  title={Funktionentheorie II},
  publisher={Springer-Verlag},
  place={Berlin--New York},
  date={1991},
}

\bib{rudin1987}{book}{
  author={Rudin, W.},  
  title={Real and Complex Analysis},
  edition={3},
  publisher={McGraw-Hill Book Co.},
  place={New York},
  date={1987},
}

	\bib{shapiro1993}{book}{
  author={Shapiro, J.H.},
  title={Composition Operators and Classical Function Theory},
  series={Universitext},
  publisher={Springer-Verlag},
  place={New York},
  date={1993},
}

\bib{shapiro2001}{article}{
  author={Shapiro, J.H.},
  title={Notes on the Dynamics of Linear Operators},
  journal={Unpublished Lecture Notes, availa\-ble at \url{http://joelshapiro.org/Pubvit/Downloads/LinDynamics/LynDynamics.html}},
  year={2001}
}

\bib{shkarin2009}{article}{
  author={Shkarin, S.A.},
  title={On the spectrum of frequently hypercyclic operators},
  journal={Proc. Amer. Math. Soc.},
  volume={137},
  date={2009},
  pages={123--134},
}

\bib{wengenroth2003}{article}{
  author={Wengenroth, J.},
  title={Hypercyclic operators on nonlocally convex spaces},
  journal={Proc. Amer. Math. Soc.},
  volume={131},
  date={2003},
  pages={1759--1761},
}

\end{biblist}
\end{bibdiv}

\end{document}